\DeclareSymbolFont{SY}{U}{psy}{m}{n}
\DeclareMathSymbol{\emptyset}{\mathord}{SY}{'306}
\theoremstyle{plain}
\newtheorem{thm}{Theorem}[section]
\newtheorem{cor}[thm]{Corollary}
\newtheorem{lem}[thm]{Lemma}
\newtheorem{prop}[thm]{Proposition}
\theoremstyle{definition}
\newtheorem{defn}[thm]{Definition}
\newtheorem{rem}[thm]{Remark}
\numberwithin{equation}{section}
\def\C{{\mathbb C}}
\def\v{\varphi}
\def\l{\lambda}
\def\ra{\rightarrow}
\def\ov{\overline}
\def\lo{\longrightarrow}
\def\m{\mathcal}
\def\mb{\mathbb}
\def\a{\alpha}
\def\g{\gamma}
\def\wi{\widetilde}
\def\beq{\begin{eqnarray}}
\def\eeq{\end{eqnarray}}
\def\beqa{\begin{eqnarray*}}
\def\eeqa{\end{eqnarray*}}
\def\ov{\overline}
\def\bl{\boldsymbol}
\begin{document}
\title[$\Gamma_n$-isometries]{A Note on $\Gamma_n$-isometries}
\author[Biswas]{Shibananda Biswas}
\address[Biswas]{Theoretical Statistics and Mathematics Unit, Indian Statistical Institute, Kolkata 700108, India}
\email{shibananda@gmail.com}

\author[Shyam Roy]{Subrata Shyam Roy}
\address[Shyam Roy]{Indian Institute of Science Education and Research, Pincode 741252, Nadia, West Bengal, India}
\email{ssroy@iiserkol.ac.in}
\thanks{The work of S. Biswas was supported by Inspire Faculty Fellowship
funded by DST at Indian Statistical Institute, Kolkata.} \subjclass[2010]{47A13, 47A15, 47A25, 47A45} \keywords{Symmetrized 
polydisc, spectral set, Beurling-Lax-Halmos theorem, von Neumann inequality}
\begin{abstract}
In this note we characterize the distinguished boundary of the symmetrized polydisc and thereby develop a model theory
for $\Gamma_n$-isometries along the lines of \cite{AY}.  We further prove that for
invariant subspaces of $\Gamma_n$-isometries, similar to the case $n=2$ \cite{S}, Beurling-Lax-Halmos type representation holds.
\end{abstract}

\maketitle
\section{Introduction}
We denote by $\mb D$ and $\ov{\mb D}$ the open and closed unit discs in the complex plane $\mb C.$
Let $s_i,i\geq 0,$ be the \emph{elementary symmetric function} in $n$ variables of degree $i$, that is, $s_i$
is the sum of all products of $i$ distinct variables $z_i$ so that $s_0=1$ and
$$
s_i(z_1,\ldots ,z_n) = \sum_{1\leq k_1< k_2 <\ldots <k_i\leq n} z_{k_1} \cdots z_{k_i}.
$$
For $n \geq 1,$ let $\mathbf s:\C^n\lo\C^n$ be the function of symmetrization given by the formula
$$\mathbf s(z_1,\ldots,z_n)= \big (s_1(z_1,\ldots , z_n), \ldots , s_n(z_1,\ldots ,z_n) \big ).$$
The image $\Gamma_n:=\mathbf s(\ov{\mb D}^n)$ under the map $\mathbf s$ of the unit $n$-polydisc is known as
the {\it symmetrized $n$-disc}. The map $\mathbf s$ is a proper holomorphic map \cite{R}.

Following \cite{AY}, any commuting $n$-tuple of operators having $\Gamma_n$ as a spectral set will be called a
$\Gamma_n$-{\it contraction}. Many of the fundamental results in the theory of contractions have close parallels for
$\Gamma_2$-contractions as shown in \cite{AY}. 
In this paper we investigate properties of $\Gamma_n$-contarctions and we give a model for $\Gamma_n$-isometries. As an application,
we prove a Beurling-Lax-Halmos type theorem characterizing joint invariant subspaces of a pure $\Gamma_n$-isometry. We also
indicate how to construct a large class of examples of $\Gamma_n$-contractions.

Although a $\Gamma_2$-contraction can be obtained by symmetrizing any pair of commuting contractions \cite{AY}, it is no
longer true that the symmetrization of any $n$-tuple of commuting contractions will necessarily give rise to a
$\Gamma_n$-contraction, if $n>2$  (see Remark \ref{csym}). In fact, the symmetrization of an $n$-tuple of commuting contractions $(T_1,\ldots,T_n)$
is a $\Gamma_n$-contraction if and only if $(T_1,\ldots,T_n)$ satisfies the analogue of von Neumann's inequality
for all symmetric polynomials in $n$ variables (see Proposition \ref{vN}). However, it is shown in \cite[Examples 1.7 and 2.3]{AY} that not all $\Gamma_2$-contractions are obtained in this way.

We usually denote a typical point of $\Gamma_n$ by $(s_1,\ldots,s_n)$. We shall also use the notation $(S_1,\ldots,S_n)$
for an $n$-tuple of commuting operators associated in some way with $\Gamma_n$. In this paper an {\it operator} will always be a
bounded linear operator on a Hilbert space. The polynomial ring in $n$ variables over the field of complex numbers
is denoted by $\mb C[z_1,\ldots,z_n].$ Consider a commuting $n$-tuple $(S_1,\ldots,S_n)$ of operators. We say that
$\Gamma_n$ is a {\it spectral set} for  $(S_1,\ldots,S_n),$ or that  $(S_1,\ldots,S_n)$ is a $\Gamma_n$-\emph{contraction},
if, for every polynomial $p\in\mb C[z_1,\ldots,z_n],$
\beq\label{spec}
\Vert p(S_1,\ldots,S_n)\Vert\leq \sup_{\bl z\in\Gamma_n}\vert p(\bl z)\vert
=\Vert p\Vert_{\infty, \Gamma_n}.
\eeq
Furthermore, $\Gamma_n$ is said to be a \emph{complete spectral set} for $(S_1,\ldots,S_n)$, or $(S_1,\ldots,S_n)$
to be a \emph{complete $\Gamma_n$-contraction}, if, for every matricial polynomial  $p$ in $n$ variables,
\beqa
\Vert p(S_1,\ldots,S_n)\Vert\leq \sup_{\bl z\in\Gamma_n}\Vert p(\bl z)\Vert.
\eeqa
Here, if $S_1,\ldots, S_n$ act on a Hilbert space $\m H$ and the matricial polynomial $p$ is given by $p=[p_{ij}]$
of order $m\times \ell$, where each $p_{ij}$ is a scalar polynomial, then $p(S_1,\ldots,S_n)$ denotes the operator from $\m H^\ell$
to $\m H^m$ with block matrix $[p_{ij}(S_1,\ldots,S_n)].$ It is a deep result \cite[Theorem 1.5]{AY} that a $\Gamma_n$-contraction
is always a complete $\Gamma_n$-contraction and vice versa, for $n=2.$ It is not clear whether a similar result is
true for $n>2.$ This will be considered in a future work.

We denote the unit circle by $\mb T.$ The distinguished boundary of $\Gamma_n$, denoted by $b\Gamma_n,$ defined to be the Silov boundary of the
algebra of functions which are continuous on $\Gamma_n$ and analytic on the interior of $\Gamma_n$, is $\mathbf s(\mb T^n)$
\cite[Lemma 8]{EZ}. We shall use some spaces of vector-valued and operator-valued functions. We recall them following \cite{AY}.
Let $\m E$ be a separable Hilbert space. We denote by $\m L(\m E)$ the space of operators on $\m E,$ with the operator norm.
Let $H^2(\m E) $ denote the usual Hardy space of analytic $\m E$-valued functions on $\mb D$ and $L^2(\m E)$ the Hilbert
space of square integrable $\m E$-valued functions on $\mb T,$ with their natural inner products. Let $H^\infty\m L(\m E)$
denote the space of bounded analytic $\m L(\m E)$-valued functions on $\mb D $ and $L^\infty\m L(\m E)$ the space of bounded measurable
$\m L(\m E)$-valued functions on $\mb T,$ each with appropriate version of the supremum norm. For
$\v\in L^\infty \m L(\m E)$ we denote by $T_\v$ the Toeplitz operator with symbol $\v,$ given by
$$T_\v f=P_+(\v f),~~~~~ f\in H^2(\m E),$$
where $P_+:L^2(\m E)\lo H^2(\m E)$ is the orthogonal projection. In particular $T_z$ is the unilateral shift operator on
$H^2(\m E)$ (the identity function on $\mb T$ will be denoted by $z$) and $T_{\bar z}$ is the backward shift on $H^2(\m E).$

The symmetrization map $\mathbf s$ is a proper holomorphic map, $\ov{\mb D}^n=
\mathbf s^{-1}(\Gamma_n)=\mathbf s^{-1}(\mathbf s(\ov{\mb D}^n))$
and $\ov{\mb D}^n$ is polynomially convex. Therefore, $\mathbf s(\ov{\mb D}^n)=\Gamma_n$ is polynomially convex by
\cite[Theorem 1.6.24]{ELS}. Although we have defined
$\Gamma_n$-contractions by requiring that the inequality \eqref{spec} holds for all polynomials $p$ in $\mb C[z_1,\ldots,z_n],$
this is equivalent to the definition of $\Gamma_n$-contractions by requiring \eqref{spec} to hold for all functions $p$
analytic in a neighbourhood of $\Gamma_n$ due to polynomial convexity of $\Gamma_n$ as explained in \cite{AY}.
As discussed in \cite{AY}, the subtleties surrounding the various notions of joint spectrum and functional calculus for commuting tuples of operators
are not relevant to this paper, simply because of the polynomial convexity of $\Gamma_n.$

\section{$\Gamma_n$ and $\Gamma_n$-contractions}
Note that $(s_1,\ldots,s_n)\in \Gamma_n$ if and only if all the zeros of the polynomial $\sum_{i=0}^n (-1)^{n-i}s_{n-i}z^i$
lie in $\ov{\mb D}.$ This realization of points of $\Gamma_n$ will be used repeatedly. We state two theorems about location
of zeros of polynomials which will be useful in the sequel. For a polynomial $p\in\mb C[z],$ the derivative of $p$ with respect
to $z$ will be denoted by $p^\prime.$

\begin{thm}\label{GL}(Gauss-Lucas, \cite[page. 22]{M}) The zeros of the derivative of a polynomial $p$ lie in the convex hull of the
zeros of $p.$
\end{thm}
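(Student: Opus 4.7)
The plan is to reduce the problem to a short calculation involving the logarithmic derivative of $p$. Let $z_1,\ldots,z_n$ be the zeros of $p$ (with multiplicity) and write $p(z)=a\prod_{k=1}^n(z-z_k)$ with $a\in\C\setminus\{0\}$. Suppose $w$ is a zero of $p'$. If $w$ happens to coincide with some $z_k$, then $w$ lies trivially in the convex hull of the $z_k$, so I may assume $p(w)\neq 0$.

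Since $p(w)\neq 0$, dividing $p'(w)=0$ by $p(w)$ and using the partial fraction expansion of the logarithmic derivative yields
\[
0 \;=\; \frac{p'(w)}{p(w)} \;=\; \sum_{k=1}^{n}\frac{1}{w-z_k}.
\]
Taking complex conjugates and multiplying numerator and denominator of each summand by $w-z_k$ transforms this into
\[
0 \;=\; \sum_{k=1}^{n}\frac{\,w-z_k\,}{|w-z_k|^{2}}.
\]
Rearranging, with $\mu_k := |w-z_k|^{-2}$ and $M := \sum_{k=1}^n \mu_k > 0$, gives $w = \tfrac{1}{M}\sum_{k=1}^n \mu_k z_k$, exhibiting $w$ as a convex combination of $z_1,\ldots,z_n$ (the coefficients $\lambda_k := \mu_k/M$ are nonnegative and sum to one). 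This places $w$ in the convex hull of the zero set of $p$, as required.

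Since this is a classical, well-known statement cited from Marden's book, I do not expect any substantial obstacle; the only subtle point is the harmless reduction to $p(w)\neq 0$ so that the logarithmic derivative identity makes sense. An alternative route, which I would mention as a sanity check, is a contrapositive argument via supporting hyperplanes: if $w$ were outside the convex hull of $\{z_1,\ldots,z_n\}$, one could rotate and translate so that all $z_k$ lie in a common open half-plane separated from $w$; then every $1/(w-z_k)$ would have imaginary part of the same strict sign, contradicting $\sum_k 1/(w-z_k)=0$.
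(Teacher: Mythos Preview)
Your proof is correct and is the standard argument via the logarithmic derivative; however, the paper does not supply its own proof of this theorem---it merely states it with a citation to Marden's book---so there is nothing to compare against. Your write-up would serve perfectly well as a self-contained justification should one be desired.
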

\noindent We recall a definition.
\begin{defn}
A polynomial $p\in\mb C[z]$ of degree $d$ is called \emph{self-inversive} if  $z^d\overline{p(\frac{1}{\bar z})}=\omega p(z)$
for some constant $\omega\in \mb C$ with $\vert \omega\vert=1.$
\end{defn}
\begin{thm} \label{Cohn}(Cohn, \cite[page, 206]{M})
 A necessary and sufficient condition for all the zeros of a polynomial $p$ to lie on the unit circle $\mb T$ is that
$p$ is self-inversive and all the zeros of $p^\prime$ lie in the closed unit disc $\ov{\mb D}.$
\end{thm}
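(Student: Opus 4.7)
The proof splits into necessity and sufficiency, with the real content in the latter.

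For \textbf{necessity}, assume every zero $\alpha_1,\ldots,\alpha_d$ of $p$ lies on $\mb T$. Writing $p(z)=a_d\prod_{k=1}^d(z-\alpha_k)$ and using $\bar\alpha_k=\alpha_k^{-1}$, a factor-by-factor computation yields
\[
z^d\,\ov{p(1/\bar z)} \;=\; \frac{(-1)^d\,\bar a_d}{a_d\,\prod_k \alpha_k}\, p(z),
\]
with the displayed constant of unit modulus, so $p$ is self-inversive. That the zeros of $p'$ all lie in $\ov{\mb D}$ is then immediate from Gauss--Lucas (Theorem \ref{GL}), since $\ov{\mb D}$ is convex and contains every zero of $p$.

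For \textbf{sufficiency}, the first observation is that the functional equation $z^d\ov{p(1/\bar z)}=\omega p(z)$, evaluated at $z=1/\bar\alpha$, forces $p(\alpha)=0\Longrightarrow p(1/\bar\alpha)=0$. Hence the zeros of a self-inversive polynomial come in reciprocal-conjugate pairs $\{\alpha,1/\bar\alpha\}$, and any zero off $\mb T$ is accompanied by a partner straddling the unit circle. I would then argue by contradiction, assuming $p$ has a zero strictly inside $\mb D$ and correspondingly one strictly outside $\ov{\mb D}$. To produce the contradiction, I would differentiate the functional equation, writing $p^\ast(z):=z^d\ov{p(1/\bar z)}=\omega p(z)$ and matching coefficients, to obtain the companion identity
\[
z^{d-1}\,\ov{p'(1/\bar z)} \;=\; \omega\bigl(d\,p(z)-z\,p'(z)\bigr),
\]
so that the reciprocal polynomial of $p'$ is expressed in terms of $p$ and $p'$. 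The hypothesis that $p'$ has no zeros with $|z|>1$ is equivalent to this reciprocal polynomial having no zeros in $\mb D$, i.e.\ $dp(z)-zp'(z)\neq 0$ on $\mb D$. A count of the zeros of $dp-zp'$ in $\mb D$ via the argument principle applied to $p'/p$ on $\mb T$ (equivalently, analysing $\sum_k \alpha_k/(z-\alpha_k)$), combined with the postulated inside/outside pairing of zeros of $p$, then delivers a zero of $dp-zp'$ inside $\mb D$, contradicting the hypothesis.

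The main obstacle is precisely this zero count. Gauss--Lucas is one-way: it confines the critical points to the convex hull of the zeros of $p$ but gives no lower bound on their moduli when the zeros of $p$ straddle $\mb T$. Closing the loop requires either a careful winding-number computation for $p'/p$ on $\mb T$ based on the displayed identity, or an appeal to the Schur--Cohn algorithm, which translates the reciprocal-conjugate pairing of zeros into an explicit sign obstruction on the coefficients of $dp-zp'$. Either route is standard but neither is purely formal; this is why the sufficiency half of Cohn's theorem is the nontrivial one.
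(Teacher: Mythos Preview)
The paper does not prove Cohn's theorem: it is quoted verbatim from Marden's \emph{Geometry of Polynomials} (Theorem~\ref{Cohn} carries only a citation, no argument) and is then used as a black box in the proof of Theorem~\ref{bg}. There is therefore no ``paper's own proof'' to compare your attempt against.

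On the merits of what you wrote: the necessity direction is correct and is the standard argument. For sufficiency you have the right scaffolding---the reciprocal-conjugate pairing of zeros, and the key identity $(p')^\ast(z)=\omega\bigl(dp(z)-zp'(z)\bigr)$ obtained by matching coefficients---but, as you yourself flag, the decisive zero count is only gestured at. The gap is real: nothing you have written actually produces a zero of $dp-zp'$ in $\mb D$ from a hypothetical zero of $p$ off $\mb T$. One way to close it cleanly is to note that on $|z|=1$ the identity gives $|p'(z)|=|dp(z)-zp'(z)|$, and then compare the winding numbers of $zp'$ and $dp-zp'$ around the origin along $|z|=1$ (after a small perturbation to avoid boundary zeros); since the former has all $d$ of its zeros in $\ov{\mb D}$ while the latter has none in $\mb D$, the discrepancy forces $p$ to have winding number $d$ inside as well, hence no zeros outside---and by the pairing, none strictly inside either. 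This, or the Schur--Cohn reduction you mention, is exactly what Marden supplies; you should cite it rather than leave the argument half-finished.
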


We shall need characterizations of the distinghuished boundary of $\Gamma_n$.
\begin{thm}\label{bg}
 Let $s_i\in\mathbb C, i=1,\ldots,n.$ The following are equivalent:
 \begin{enumerate}
  \item[(i)] $(s_1,\ldots, s_{n})$ is in the distinguished boundary of $\Gamma_n$;
  \item[(ii)]$\vert s_n\vert=1, \bar s_n s_i=\bar s_{n-i}$ and $(\gamma_1s_1,\ldots,\gamma_{n-1}s_{n-1})\in\Gamma_{n-1},$
where $\gamma_i=\frac{n-i}{n}$ for $i=1,\ldots,n-1$;
  \item[(iii)] $(s_1,\ldots, s_{n})\in \Gamma_n$ and $\vert s_n\vert=1$.

 \end{enumerate}

\end{thm}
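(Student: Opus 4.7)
The strategy hinges on encoding the three conditions via the polynomial
\[
p(z) \;=\; \prod_{j=1}^n (z-\alpha_j) \;=\; \sum_{i=0}^n (-1)^{n-i}\, s_{n-i}\, z^i,
\]
whose coefficients are, up to sign, the elementary symmetric functions. By the observation recalled at the start of Section~2, $(s_1,\ldots,s_n)\in\Gamma_n$ iff every zero of $p$ lies in $\ov{\mb D}$, and since $b\Gamma_n=\mathbf{s}(\mb T^n)$, condition (i) is equivalent to the assertion that all zeros of $p$ lie on $\mb T$.

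The equivalence (i)$\Leftrightarrow$(iii) is then immediate. If (iii) holds, the $n$ zeros of $p$ lie in $\ov{\mb D}$ while their product has modulus $|s_n|=1$, which forces each $|\alpha_j|=1$; the reverse implication is trivial, since $b\Gamma_n\subset\Gamma_n$ and $|s_n|=|\alpha_1\cdots\alpha_n|=1$.

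For (i)$\Leftrightarrow$(ii) I would apply Cohn's theorem (Theorem~\ref{Cohn}) to $p$, which requires two computations. First, expanding
\[
z^n\,\ov{p(1/\bar z)} \;=\; \sum_{j=0}^n (-1)^j \bar s_j\, z^j
\]
and comparing with $\omega\, p(z)$, the coefficients of $z^0$ and $z^n$ force $\omega=(-1)^n\bar s_n$ together with $|s_n|=1$, and the remaining coefficients yield the relations $\bar s_n s_i=\bar s_{n-i}$ (and vice versa). So the self-inversivity of $p$ is equivalent to the first two clauses in (ii). Second, a direct differentiation gives
\[
\tfrac{1}{n}\,p'(z) \;=\; \sum_{k=0}^{n-1} \tfrac{k+1}{n}\,(-1)^{n-1-k}\,s_{n-1-k}\,z^{k},
\]
and the substitution $j=n-1-k$ identifies the right-hand side with the characteristic polynomial of the $(n-1)$-tuple $(\gamma_1 s_1,\ldots,\gamma_{n-1}s_{n-1})$, where $\gamma_j=(n-j)/n$. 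Thus all zeros of $p'$ lie in $\ov{\mb D}$ iff $(\gamma_1 s_1,\ldots,\gamma_{n-1}s_{n-1})\in\Gamma_{n-1}$. Cohn's theorem then delivers (i)$\Leftrightarrow$(ii).

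The main obstacle is the indexing bookkeeping in the two identifications above: matching the coefficients of $z^n\,\ov{p(1/\bar z)}$ with $\omega p(z)$ to extract the symmetry relations, and reindexing $p'/n$ to recognize it as the characteristic polynomial of the reduced tuple. Once these translations are in hand, Cohn's theorem finishes the job mechanically; Gauss--Lucas (Theorem~\ref{GL}) is not logically needed but would give an independent and cleaner route to (iii)$\Rightarrow$(ii), since zeros of $p$ on $\mb T$ force zeros of $p'$ into the convex hull $\ov{\mb D}$.
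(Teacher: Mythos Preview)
Your proof is correct and follows essentially the same approach as the paper: both arguments encode (i)$\Leftrightarrow$(ii) via Cohn's theorem applied to the polynomial $p$, identifying self-inversivity with the relations $|s_n|=1,\ \bar s_n s_i=\bar s_{n-i}$ and the condition on the zeros of $p'$ with $(\gamma_1 s_1,\ldots,\gamma_{n-1}s_{n-1})\in\Gamma_{n-1}$, while (i)$\Leftrightarrow$(iii) is handled by the product-of-moduli argument. The only cosmetic difference is that the paper reads off the relations $|s_n|=1$ and $\bar s_n s_i=\bar s_{n-i}$ directly from $\lambda_j\in\mb T$ in the (i)$\Rightarrow$(ii) direction rather than via self-inversivity, but the content is the same.
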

\begin{proof}
 Throughout this proof, we put $s_0=1.$ Let $(s_1,\ldots, s_{n})$ be in the distinguished boundary of $\Gamma_n$.
By definition, there are $\lambda_i\in \mathbb T$  such that
 \beq\label{elemen}
 s_i=\sum_{1\leq k_1<\ldots<k_i\leq n}\lambda_{k_1}\ldots \lambda_{k_i}\mbox{~for~} i=1,\ldots,n.
 \eeq
 This is equivalent to the fact that the polynomial $p,$ given
 by
\beq\label{p}
p(z)=\sum_{i=0}^n (-1)^{n-i}s_{n-i}z^i
\eeq
 has all its zeros on $\mathbb T$. Moreover, we clearly
 have $\vert s_n\vert=1, \bar s_n s_i=\bar s_{n-i}$  for $i=1,\ldots,n-1$. It follows from Theorem \ref{Cohn}
 that the polynomial
\beq\label{dp}
p^\prime(z)=\sum_{i=1}^{n}(-1)^{n-i}is_{n-i}z^{i-1}
\eeq
has all its roots in $\overline{\mathbb D},$
 which is equivalent to the fact that $(\gamma_1s_1,\ldots,\gamma_{n-1}s_{n-1})\in\Gamma_{n-1},$ where
 $\gamma_i=\frac{n-i}{n}$ for $i=1,\ldots,n-1.$ Therefore (i) implies (ii).

 Conversely, considering the polynomial in Equation \eqref{p}, we observe that
 \beqa
z^n\overline{p\bigg(\frac{1}{\bar z}\bigg)}=\sum_{i=0}^n(-1)^i\bar s_iz^i\mbox{~ and~}
(-1)^ns_n z^n\overline{p\bigg(\frac{1}{\bar z}\bigg)}=p(z)
\eeqa
 by the first part of (ii). Therefore $p$ is a self-inversive polynomial.
 Note that all the roots of $p^{\prime}$ lies in $\overline{\mathbb D}$ as $(\gamma_1s_1,\ldots,\gamma_{n-1}s_{n-1})\in\Gamma_{n-1},$  where
 $\gamma_i=\frac{n-i}{n}$ for $i=1,\ldots,n-1.$
 Thus, it follows from  Theorem \ref{Cohn}
 that $p$ has all  its roots on $\mathbb T.$ This is same as saying that $(s_1,\ldots,s_n)$ is in the distinguished
 boundary of $\Gamma_n.$

 Clearly, (i) implies (iii). To see the converse, we note that (iii) implies that there exist $\l_i\in\overline{\mb D},i=1,\ldots,n,$
 such that \eqref{elemen} holds and $\vert s_n\vert=\vert\l_1\vert\ldots\vert\l_n\vert=1.$ So $ \l_i\in \mb T$ for
all $i=1,\ldots,n.$
 It follows from the definition and \eqref{elemen} that $(s_1,\ldots,s_n)$ is in the distinguished boundary of $\Gamma_n.$
 \end{proof}
 \begin{rem}
 From the Lemma above, similar to the part (4) of \cite[Theorem 1.3]{BPS}, one can give expressions for $s_j$'s, $j=1,\ldots,n$. Clearly, $s_n =
 \l_1\ldots\l_n= e^{i\theta}$ for some $\theta$ in $\mathbb R.$ So $\l_n=e^{i\theta}\bar\l_1\ldots\bar\l_{n-1}.$ Since
 $$ s_j=\sum_{1\leq k_1< k_2 <\ldots <k_j\leq n} \l_{k_1} \cdots \l_{k_j}\mbox{~~with ~~}\vert \l_j\vert=1$$
 for $j=1,\ldots,n,$ we have
 \beqa
  s_j=\sum_{1\leq k_1< k_2 <\ldots <k_j\leq n} \l_{k_1} \cdots \l_{k_j}=\mu_j+\bar\mu_{j-1} \l_n
  \eeqa
where $$\mu_j=\sum_{1\leq k_1< k_2 <\ldots <k_j\leq n-1} \l_{k_1} \cdots \l_{k_j}.$$
Since $\l_n=e^{i\theta}\bar\l_1\ldots\bar\l_{n-1}$ we obtain
\beqa
s_j=\mu_j+\bar \mu_{j-1}e^{i\theta}\bar\l_1\ldots\bar\l_{n-1}=\mu_j+\mu_{j-1}\bar \mu_{n-1}e^{i\theta}
  = \mu_j+\mu_{n-j} e^{i\theta},
 \eeqa
 since $(\mu_1,\ldots,\mu_{n-1})\in b \Gamma_{n-1},\ \bar\mu_{n-1}\mu_{j-1}=\bar\mu_{n-j},\ j=1,\ldots, n-1.$

\end{rem}
 \begin{lem}\label{proj}
  If $(s_1,\ldots, s_n) \in \Gamma_n,$ then $(\gamma_1s_1,\ldots,\gamma_{n-1}s_{n-1})\in\Gamma_{n-1},$  where
  $\gamma_i=\frac{n-i}{n}$ for $i=1,\ldots,n-1.$
 \end{lem}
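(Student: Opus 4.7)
The plan is to exploit the polynomial realization of $\Gamma_n$ noted at the start of Section 2: setting $s_0=1$, membership $(s_1,\ldots,s_n) \in \Gamma_n$ is equivalent to all zeros of
$$p(z) = \sum_{i=0}^{n} (-1)^{n-i} s_{n-i} z^i = z^n - s_1 z^{n-1} + s_2 z^{n-2} - \cdots + (-1)^n s_n$$
lying in $\overline{\mb D}$. Then I will apply the Gauss--Lucas theorem (Theorem \ref{GL}) to $p$.

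The key computational step is to identify the normalized derivative $p'(z)/n$ as precisely the polynomial that encodes the candidate point $(\gamma_1 s_1,\ldots,\gamma_{n-1} s_{n-1})$ in $\Gamma_{n-1}$. Differentiating and dividing by $n$ gives
$$\frac{p'(z)}{n} = z^{n-1} - \frac{n-1}{n} s_1 z^{n-2} + \frac{n-2}{n} s_2 z^{n-3} - \cdots + (-1)^{n-1}\frac{1}{n} s_{n-1},$$
so the coefficient of $z^{n-1-i}$ equals $(-1)^i \gamma_i s_i$ for $i=0,1,\ldots,n-1$ (with $\gamma_0=1$). Comparing this with the polynomial $\sum_{i=0}^{n-1}(-1)^{n-1-i} t_{n-1-i}z^i$ whose zeros must lie in $\overline{\mb D}$ for $(t_1,\ldots,t_{n-1}) \in \Gamma_{n-1}$, I see that $p'/n$ is exactly this polynomial for $t_i = \gamma_i s_i$.

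Finally, since $(s_1,\ldots,s_n)\in\Gamma_n$, all zeros of $p$ lie in $\overline{\mb D}$; by Gauss--Lucas the zeros of $p'$ lie in the convex hull of the zeros of $p$, and since $\overline{\mb D}$ is convex, the zeros of $p'$ (equivalently, of $p'/n$) also lie in $\overline{\mb D}$. By the identification above, this is exactly the statement that $(\gamma_1 s_1,\ldots,\gamma_{n-1}s_{n-1}) \in \Gamma_{n-1}$.

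There is no real obstacle here; the entire argument is a one-step application of Gauss--Lucas once the correct normalization constants $\gamma_i = (n-i)/n$ have been recognized as the ratios of consecutive binomial-type coefficients produced by differentiation of $p$.
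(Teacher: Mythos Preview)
Your proof is correct and is essentially the same as the paper's own argument: both use the polynomial realization of $\Gamma_n$ and apply the Gauss--Lucas theorem (Theorem~\ref{GL}) to the defining polynomial $p$, observing that the derivative $p'$ (after normalization by $n$) is exactly the polynomial encoding $(\gamma_1 s_1,\ldots,\gamma_{n-1}s_{n-1})$. You spell out the coefficient identification more explicitly than the paper does, but the method is identical.
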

\begin{proof}
The hypothesis is equivalent to the fact that the polynomial $p(z)=\sum_{i=0}^n (-1)^{n-i}s_{n-i}z^i,$ has all its roots in the closed unit disc
$\overline{\mb D}.$ It follows from Theorem \ref{GL} that the polynomial $p^\prime(z)=\sum_{i=1}^{n}(-1)^{n-i}is_{n-i}z^{i-1}$ has all its roots
in the closed unit disc as well. Hence we have the desired conclusion.
\end{proof}

\begin{rem}
 Considering the map $\pi:\mb C^n\lo \mb C^{n-1}$ defined by $\pi(z_1,\ldots,z_n)=(\gamma_1z_1,\ldots,\gamma_{n-1}z_{n-1})$ the
above Lemma can be restated as $\pi(\Gamma_n)\subseteq \Gamma_{n-1}.$
\end{rem}

\begin{lem}\label{projection}
 If $(S_1,\ldots, S_n)$ is a $\Gamma_n$-contraction, then $(\g_1S_1,\ldots,\g_{n-1}S_{n-1})$ is a $\Gamma_{n-1}$-contraction,
   where $\g_i=\frac{n-i}{n}$ for $i=1,\ldots,n-1.$
\end{lem}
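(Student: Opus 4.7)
The plan is to reduce the operator statement to the set-theoretic inclusion $\pi(\Gamma_n)\subseteq \Gamma_{n-1}$ already established in Lemma \ref{proj}, by pulling polynomials on $\mathbb{C}^{n-1}$ back to polynomials on $\mathbb{C}^n$ along the coordinate-scaling map $\pi(z_1,\ldots,z_n)=(\gamma_1 z_1,\ldots,\gamma_{n-1}z_{n-1})$.

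Concretely, I would start from an arbitrary polynomial $q\in\mb C[w_1,\ldots,w_{n-1}]$ and define
\[
p(z_1,\ldots,z_n)\;:=\;q(\gamma_1 z_1,\ldots,\gamma_{n-1}z_{n-1})\;\in\;\mb C[z_1,\ldots,z_n].
\]
Then, because the variable $z_n$ does not actually appear, one has the identity
\[
p(S_1,\ldots,S_n)=q(\gamma_1 S_1,\ldots,\gamma_{n-1}S_{n-1})
\]
inside the commuting functional calculus. Applying the $\Gamma_n$-contraction hypothesis to $p$ gives
\[
\|q(\gamma_1 S_1,\ldots,\gamma_{n-1}S_{n-1})\|=\|p(S_1,\ldots,S_n)\|\le \sup_{(s_1,\ldots,s_n)\in\Gamma_n}|q(\gamma_1 s_1,\ldots,\gamma_{n-1}s_{n-1})|.
\]

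The final step is to bound the right-hand side by $\|q\|_{\infty,\Gamma_{n-1}}$. This is exactly where Lemma \ref{proj} enters: it guarantees $(\gamma_1 s_1,\ldots,\gamma_{n-1}s_{n-1})\in\Gamma_{n-1}$ whenever $(s_1,\ldots,s_n)\in\Gamma_n$, so the supremum above is taken over a subset of $\Gamma_{n-1}$ and is therefore at most $\sup_{\bl w\in\Gamma_{n-1}}|q(\bl w)|=\|q\|_{\infty,\Gamma_{n-1}}$. Since $q$ was arbitrary, $(\gamma_1 S_1,\ldots,\gamma_{n-1}S_{n-1})$ is a $\Gamma_{n-1}$-contraction.

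There is essentially no obstacle here; the only subtlety worth mentioning is making sure the functional calculus is legitimate, but since $\Gamma_n$ is polynomially convex (as recorded in the introduction) and $p$ is a genuine polynomial in $S_1,\ldots,S_n$, the evaluation $p(S_1,\ldots,S_n)$ is unambiguous and coincides with $q(\gamma_1 S_1,\ldots,\gamma_{n-1}S_{n-1})$ by direct substitution. The whole argument is just the operator-level reflection of the remark following Lemma \ref{proj} that $\pi(\Gamma_n)\subseteq\Gamma_{n-1}$.
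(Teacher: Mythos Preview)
Your proof is correct and follows essentially the same approach as the paper: pull back an arbitrary polynomial along $\pi$, apply the $\Gamma_n$-contraction hypothesis, and then use Lemma~\ref{proj} (equivalently, the inclusion $\pi(\Gamma_n)\subseteq\Gamma_{n-1}$) to bound the resulting supremum by $\Vert q\Vert_{\infty,\Gamma_{n-1}}$. The paper's version is simply a terser rendition of exactly this argument.
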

\begin{proof}
 For $p\in\mb C[z_1,\ldots z_{n-1}],$ we note that $p\circ \pi\in\mb C[z_1,\ldots,z_n]$ and by hypothesis
 \beqa
 \Vert p(\g_1S_1,\ldots,\g_{n-1}S_{n-1})\Vert&=&\Vert p\circ\pi(S_1,\ldots,S_n)\Vert\\
 &\leq& \Vert  p\circ\pi \Vert_{\infty,\Gamma_n}\\
 &= & \Vert  p\Vert_{\infty,\pi(\Gamma_n)}\leq \Vert  p\Vert_{\infty,\Gamma_{n-1}}.\\
 \eeqa This completes the proof.
\end{proof}


 \begin{lem}
If $(s_1,\ldots, s_n)\in\Gamma_n,$  then $(\a + s_1, \a s_1 + s_2,\ldots,\a s_{n-1} + s_n, \a s_n) \in \Gamma_{n+1}$
for all $\a$ in $\ov{\mathbb D}$.
 \end{lem}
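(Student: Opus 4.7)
The plan is to use the polynomial characterization of $\Gamma_n$ recalled at the start of Section 2: a point $(s_1,\ldots,s_n)$ lies in $\Gamma_n$ if and only if the monic polynomial
\[
p(z)=\sum_{i=0}^{n}(-1)^{n-i}s_{n-i}z^{i}=z^{n}-s_{1}z^{n-1}+s_{2}z^{n-2}-\cdots+(-1)^{n}s_{n}
\]
has all of its zeros in $\overline{\mathbb D}$. So my task reduces to exhibiting a degree $n+1$ monic polynomial with all roots in $\overline{\mathbb D}$ whose coefficients match, up to the standard signs, the proposed tuple.

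Given $\alpha\in\overline{\mathbb D}$, the natural candidate is $q(z)=(z-\alpha)p(z)$. Since $p$ has all roots in $\overline{\mathbb D}$ (by hypothesis) and $\alpha\in\overline{\mathbb D}$, $q$ has all its $n+1$ roots in $\overline{\mathbb D}$. The remaining step is the bookkeeping: expanding $(z-\alpha)p(z)$ and reading off the coefficient of $z^{n+1-k}$ gives $(-1)^{k}(s_{k}+\alpha s_{k-1})$ for $k=1,\ldots,n$ (with $s_0=1$), and constant term $(-1)^{n+1}\alpha s_n$. Writing $t_1=\alpha+s_1$, $t_k=\alpha s_{k-1}+s_k$ for $2\le k\le n$, and $t_{n+1}=\alpha s_n$, this identifies
\[
q(z)=\sum_{i=0}^{n+1}(-1)^{n+1-i}t_{n+1-i}z^{i},
\]
which is precisely the polynomial attached to $(t_1,\ldots,t_{n+1})$ by the characterization. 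Hence $(t_1,\ldots,t_{n+1})\in\Gamma_{n+1}$.

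There is no genuine obstacle here; the only thing to watch is keeping the sign conventions straight during the expansion. Alternatively, one could give a one-line proof by writing $s_k=e_k(\lambda_1,\ldots,\lambda_n)$ with $\lambda_j\in\overline{\mathbb D}$ and using the identity $e_k(\lambda_1,\ldots,\lambda_n,\alpha)=e_k(\lambda_1,\ldots,\lambda_n)+\alpha\,e_{k-1}(\lambda_1,\ldots,\lambda_n)$ to recognize $(t_1,\ldots,t_{n+1})=\mathbf{s}(\lambda_1,\ldots,\lambda_n,\alpha)\in\Gamma_{n+1}$; this is really the same argument written on the level of roots rather than coefficients.
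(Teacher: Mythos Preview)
Your proof is correct and is essentially the same argument as the paper's, just phrased on the level of polynomials rather than roots: the paper writes $s_i=e_i(\lambda_1,\ldots,\lambda_n)$ with $\lambda_k\in\overline{\mathbb D}$, sets $\lambda_{n+1}=\alpha$, and uses the recursion $e_k(\lambda_1,\ldots,\lambda_{n+1})=\alpha\,e_{k-1}(\lambda_1,\ldots,\lambda_n)+e_k(\lambda_1,\ldots,\lambda_n)$ to conclude---exactly what you describe as your ``alternative'' one-line proof. Your primary version via $q(z)=(z-\alpha)p(z)$ is the same computation written on the coefficient side, as you yourself observe.
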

 \begin{proof}
  If $(s_1,\ldots, s_n)\in\Gamma_n,$  then it follows from the definition of $\Gamma_n$ that there are $\l_k\in\ov{\mb D},
k=1,\ldots,n$ such that
$$s_i=\sum_{1\leq k_1< k_2 <\ldots <k_i\leq n} \l_{k_1} \cdots \l_{k_i},i=1,\ldots,n.$$
If $\a(:=\l_{n+1})\in\ov{\mb D},$ then $(\tilde s_1,\ldots, \tilde s_{n+1})\in\Gamma_{n+1},$ where
$$\tilde s_i=\sum_{1\leq k_1< k_2 <\ldots <k_i\leq n+1} \l_{k_1} \cdots \l_{k_i},i=1,\ldots,n+1.$$
Putting $s_0=1$ and $s_{n+1}=0,$ we note that $\tilde s_i=\a s_{i-1}+s_{i},i=1,\ldots, n+1.$
Therefore we have the desired conclusion.
\end{proof}
\begin{rem}
For any $\a\in \mb C,$  considering the one-to-one map $\pi_\a:\mb C^n\lo\mb C^{n+1}$ defined by
$$\pi_\a(z_1,\ldots, z_n)=(\a + z_1, \a z_1 + z_2,\ldots,\a z_{n-1} + z_n, \a z_n),$$
the above Lemma can be restated as $\pi_\a(\Gamma_n)\subseteq \Gamma_{n+1}$ for all $\a\in\ov{\mb D}.$
\end{rem}

\begin{lem}\label{embed}
 Let $(S_1,\ldots, S_n)$ be a $\Gamma_n$-contraction, then $(\a + S_1, \a S_1 + S_2,\ldots,\a S_{n-1} + S_n, \a S_n)$ is a $\Gamma_{n + 1}$-contraction for all $\a$ in $\ov{\mathbb D}$.
\end{lem}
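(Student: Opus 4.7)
The plan is to mimic exactly the short functorial argument used in Lemma \ref{projection}, replacing the projection $\pi$ by the map $\pi_\alpha$ introduced in the remark immediately preceding the statement. The key point is that the containment $\pi_\alpha(\Gamma_n)\subseteq \Gamma_{n+1}$ for $\alpha\in\overline{\mb D}$ has already been established, so the problem reduces to transferring a polynomial inequality on $\Gamma_{n+1}$ to one on $\Gamma_n$ via composition with $\pi_\alpha$.

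Concretely, I would fix $\alpha\in\overline{\mb D}$ and set $\T = (\alpha + S_1,\ \alpha S_1 + S_2,\ \ldots,\ \alpha S_{n-1} + S_n,\ \alpha S_n)$. For any $p\in\mb C[z_1,\ldots,z_{n+1}]$, the composition $p\circ\pi_\alpha$ is a polynomial in $n$ variables, and by the functional-calculus compatibility of polynomials with commuting tuples, $p(\T) = (p\circ\pi_\alpha)(S_1,\ldots,S_n)$. Applying the $\Gamma_n$-contraction hypothesis to $p\circ\pi_\alpha$ gives
\beqa
\|p(\T)\| = \|(p\circ\pi_\alpha)(S_1,\ldots,S_n)\| \leq \|p\circ\pi_\alpha\|_{\infty,\Gamma_n} = \|p\|_{\infty,\pi_\alpha(\Gamma_n)} \leq \|p\|_{\infty,\Gamma_{n+1}},
\eeqa
where the final inequality uses $\pi_\alpha(\Gamma_n)\subseteq \Gamma_{n+1}$ from the preceding remark.

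There is essentially no obstacle here beyond verifying the commutativity and the identity $p(\T) = (p\circ\pi_\alpha)(S_1,\ldots,S_n)$; the former is a straightforward check since each component of $\T$ is a polynomial in the commuting operators $S_1,\ldots,S_n$, and the latter follows from the definition of $\pi_\alpha$ and polynomial functional calculus for commuting tuples. The substantive content of the lemma is entirely contained in the set-theoretic inclusion $\pi_\alpha(\Gamma_n)\subseteq \Gamma_{n+1}$, which the previous lemma has already supplied.
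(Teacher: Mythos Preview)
Your proposal is correct and is essentially identical to the paper's own proof: both compose an arbitrary polynomial $p$ on $\mb C^{n+1}$ with the affine map $\pi_\alpha$, use the $\Gamma_n$-contraction hypothesis on $p\circ\pi_\alpha$, and then invoke the inclusion $\pi_\alpha(\Gamma_n)\subseteq\Gamma_{n+1}$ to bound $\|p\|_{\infty,\pi_\alpha(\Gamma_n)}$ by $\|p\|_{\infty,\Gamma_{n+1}}$. The paper presents exactly this chain of inequalities with no additional ingredients.
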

\begin{proof}
 For $p\in \mb C[z_1,\ldots,z_{n+1}],$ we observe that $p\circ\pi_\a\in\mb C[z_1,\ldots,z_n],$ so by hypothesis we have
\beqa
&&\Vert p(\a + S_1, \a S_1 + S_2,\ldots,\a S_{n-1} + S_n, \a S_n)\Vert\\
&=&\Vert p\circ\pi_\a(S_1,\ldots,S_n)\Vert\\
&\leq&\Vert p\circ\pi_\a\Vert_{\infty,\Gamma_n}\\
&=&\Vert p\Vert_{\infty,\pi_\a(\Gamma_n)}\\
&\leq& \Vert p\Vert_{\infty,\Gamma_{n+1}}.\eeqa
Hence the proof.
\end{proof}
For an $n$-tuple $\mathbf T=(T_1,\ldots,T_n)$ of commuting operators on a Hilbert space $\m H,$ let $\mathbf s(\mathbf T)
:=(s_1(\mathbf T),\ldots,s_n(\mathbf T)),$ we call $\mathbf s(\mathbf T),$ the symmetrization of $\mathbf T.$ A polynomial
$p\in \mb C[z_1,\ldots,z_n]$ is called \emph{symmetric} if $p(\bl z_{\sigma}):=p(z_{\sigma(1)},\ldots,z_{\sigma(n)})=p(\bl z)$
for all $\bl z\in \mb C^n$ and $\sigma\in \Sigma_n,$ where $\Sigma_n$ is the symmetric group on $n$ symbols. If $p\in \mb C[z_1,\ldots,z_n]$
is symmetric, then there is a unique $q\in \mb C[z_1,\ldots,z_n]$ such that $p=q\circ \mathbf s,$ where $\mathbf s$ is the
symmetrization map \cite[Theorem 3.3.1]{E}.

\begin{rem}\label{csym}
 One may be tempted to conjecture that Lemma above is true when $\a$ is replaced by a contraction operator $T$ which
commutes with all the $S_i$'s, $i=1,\ldots,n$. However, this is no longer true. We take $n=2$ and give an example of
a $\Gamma_2$-contraction $(S_1,S_2)$ and a contraction $T$ such that $(T+S_1,TS_1+S_2,TS_2)$ is not a $\Gamma_3$-contraction.
Let $(A_1, A_2, A_3)$ be a tuple of commuting contractions as in Kaijser-Varopoulos \cite[Example 5.7]{paul}. We take
$S_1=A_1+A_2, S_2=A_1A_2$ and $T=A_3.$ Clearly,
$(S_1,S_2)=(A_1 + A_2, A_1A_2)$ is a $\Gamma_2$-contraction due to Ando's inequality. But
$(T+S_1,TS_1+S_2,TS_2)=\mathbf s(A_1, A_2 , A_3) $ is not a $\Gamma_3$-contraction due
to the failure of von Neumann's inequality for more than two commuting contractions. Consider the symmetric polynomial
 $$ p(z_1,z_2,z_3) = z_1^2 + z_2^2 + z_3^2 - 2z_1z_2 - 2z_2z_3 - 2z_3z_1$$
in \cite[Example 5.7]{paul}. Taking $q$ to be the polynomial in $3$-variables such that $q\circ \mathbf  s = p$, where
$\mathbf s$ is the symmetrization map, one observes that $\Gamma_3$ cannot be a spectral set for $\mathbf s(A_1, A_2 , A_3) .$
\end{rem}
\begin{prop}\label{vN}
The symmetrization of an $n$-tuple of commuting contractions $(T_1,\ldots,T_n)$
is a $\Gamma_n$-contraction if and only if $(T_1,\ldots,T_n)$ satisfies the analogue of von Neumann's inequality
for all symmetric polynomials in $\mb C[z_1,\ldots,z_n]$.
\end{prop}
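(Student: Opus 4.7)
The proof is essentially a translation exercise between polynomials on $\Gamma_n$ and symmetric polynomials on $\overline{\mathbb{D}}^n$, exploiting the theorem (cited as \cite[Theorem 3.3.1]{E}) that a polynomial $p\in\mb C[z_1,\ldots,z_n]$ is symmetric if and only if there is a unique $q\in\mb C[z_1,\ldots,z_n]$ with $p=q\circ\mathbf s$. Thus the map $q\mapsto q\circ\mathbf s$ is a bijection between all polynomials and the symmetric ones. My plan is to show that under this bijection, the two inequalities in question are literally the same inequality, so each direction of the equivalence is a one-line computation.

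The two identities I would record first are: \textbf{(a)} $q(\mathbf s(\mathbf T))=(q\circ\mathbf s)(\mathbf T)$ for every $q\in\mb C[z_1,\ldots,z_n]$, which follows because the $s_i(\mathbf T)$ are themselves polynomials in the mutually commuting operators $T_1,\ldots,T_n$, so the polynomial functional calculus in $n$ commuting operators is compatible with substitution; and \textbf{(b)} $\|q\|_{\infty,\Gamma_n}=\|q\circ\mathbf s\|_{\infty,\ov{\mb D}^n}$, which is immediate from $\Gamma_n=\mathbf s(\ov{\mb D}^n)$ together with surjectivity of $\mathbf s$ onto $\Gamma_n$ (so the sup over $\Gamma_n$ is attained through points of $\ov{\mb D}^n$).

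For the forward direction, suppose $\mathbf s(\mathbf T)$ is a $\Gamma_n$-contraction. Given a symmetric polynomial $p$, write $p=q\circ\mathbf s$. By (a) and the $\Gamma_n$-contractivity of $\mathbf s(\mathbf T)$, followed by (b),
\beqa
\|p(\mathbf T)\|=\|q(\mathbf s(\mathbf T))\|\leq \|q\|_{\infty,\Gamma_n}=\|p\|_{\infty,\ov{\mb D}^n},
\eeqa
which is von Neumann's inequality for $\mathbf T$ on symmetric polynomials. Conversely, suppose the von Neumann inequality holds for $\mathbf T$ against all symmetric polynomials. Given any $q\in\mb C[z_1,\ldots,z_n]$, the polynomial $p:=q\circ\mathbf s$ is symmetric since each $s_i$ is, so by hypothesis, then (a) and (b),
\beqa
\|q(\mathbf s(\mathbf T))\|=\|p(\mathbf T)\|\leq\|p\|_{\infty,\ov{\mb D}^n}=\|q\|_{\infty,\Gamma_n},
\eeqa
confirming that $\Gamma_n$ is a spectral set for $\mathbf s(\mathbf T)$.

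There is no real obstacle here beyond verifying (a) and (b) carefully; everything else is formal. The only point worth being explicit about is that (b) uses the surjection $\mathbf s:\ov{\mb D}^n\to\Gamma_n$ in both directions (the sup on the left is bounded above by the sup on the right because $\mathbf s(\ov{\mb D}^n)\subseteq\Gamma_n$, and bounded below because $\mathbf s(\ov{\mb D}^n)=\Gamma_n$), so one does not need continuity arguments or the polynomial convexity of $\Gamma_n$ to establish this equality.
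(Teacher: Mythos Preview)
Your proof is correct and follows essentially the same approach as the paper's own proof: both directions reduce to the identities $q(\mathbf s(\mathbf T))=(q\circ\mathbf s)(\mathbf T)$ and $\|q\|_{\infty,\Gamma_n}=\|q\circ\mathbf s\|_{\infty,\ov{\mb D}^n}$, together with the fact that $q\mapsto q\circ\mathbf s$ is a bijection from all polynomials to the symmetric ones. The only cosmetic difference is that you treat the two implications in the opposite order and make the justification of the sup-norm identity (b) a bit more explicit.
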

\begin{proof}
 Let $\mathbf T=(T_1,\ldots,T_n)$ be a commuting $n$-tuple of contractions satisfying the analogue of von Neumann's
inequality for all symmetric polynomials $\mb C[z_1,\ldots,z_n]$.
 We show that $\mathbf s(\mathbf T)=(s_1(\mathbf T),\ldots, s_n(\mathbf T))$ is a $\Gamma_n$-contraction.
For a polynomial $p\in\mb C[z_1,\ldots,z_n],$ we note that
\beqa
\Vert p(s_1(\mathbf T),\ldots, s_n(\mathbf T))\Vert= \Vert p\circ \mathbf s( \mathbf T)\Vert
\leq \Vert p\circ \mathbf s\Vert_{\infty,\ov{\mb D}^n}
= \Vert p\Vert_{\infty,\mathbf s(\ov{\mb D}^n)},
\eeqa
since $p\circ\mathbf s$ is a  symmetric polynomial, the above inequality holds by hypothesis, and it shows that
$\Gamma_n=\mathbf s(\ov{\mb D}^n)$ is a spectral set for $(s_1(\mathbf T),\ldots,s_n(\mathbf T)).$

Conversely, let $\mathbf T=(T_1,\ldots,T_n)$ be  a tuple of commuting contractions such that $\mathbf s(\mathbf T)$ is a
$\Gamma_n$-contraction and  $q\in\mb C[z_1,\ldots,z_n]$ be symmetric. So there is a $p\in\mb C[z_1,\ldots,z_n]$ such that
$p\circ \mathbf s=q.$ By hypothesis, we have
\beqa
\Vert q(\mathbf T)\Vert=\Vert p( \mathbf s(\mathbf T))\Vert\leq \Vert p\Vert_{\infty,\Gamma_n}
=\Vert p\circ \mathbf s\Vert_{\infty,\ov{\mb D}^n}=\Vert q\Vert_{\infty,\ov{\mb D}^n}.
\eeqa
\end{proof}

Next, we give a straightforward generalization of the Lemma 3.2 in \cite{BPS}.
\begin{prop}\label{res}
Let $\mathbf T=(T_1,\ldots,T_n)$ be a commuting $n$-tuple of contractions on a Hilbert space $\m H$ satisfying the
analogue of von Neumann's inequality for all symmetric polynomials in $\mb C[z_1,\ldots,z_n].$ Let $\m M$ be an invariant
subspace for $s_i(\mathbf T), i=1,\ldots,n.$ Then $(s_1(\mathbf T)\vert_{\m M},\ldots,s_n(\mathbf T)\vert_{\m M})$ is a
$\Gamma_n$-contraction on the Hilbert space $\m M.$
\end{prop}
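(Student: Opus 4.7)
The plan is to reduce the statement directly to Proposition \ref{vN} and the elementary observation that restricting to an invariant subspace never increases the operator norm.

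First, since $\mathbf T=(T_1,\ldots,T_n)$ is assumed to satisfy the analogue of von Neumann's inequality for all symmetric polynomials in $\mathbb C[z_1,\ldots,z_n]$, Proposition \ref{vN} immediately gives that $\mathbf s(\mathbf T)=(s_1(\mathbf T),\ldots,s_n(\mathbf T))$ is a $\Gamma_n$-contraction on $\mathcal H$. Thus for every $p\in\mathbb C[z_1,\ldots,z_n]$,
\[
\|p(s_1(\mathbf T),\ldots,s_n(\mathbf T))\|\le \|p\|_{\infty,\Gamma_n}.
\]

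Next I would use the fact that $\mathcal M$ is simultaneously invariant under $s_1(\mathbf T),\ldots,s_n(\mathbf T)$. Denote $S_i:=s_i(\mathbf T)$ and $\widetilde S_i:=S_i\vert_{\mathcal M}$. Because each $S_i$ leaves $\mathcal M$ invariant and the $S_i$'s commute, the restrictions $\widetilde S_i$ are commuting operators on $\mathcal M$, and for any polynomial $p\in\mathbb C[z_1,\ldots,z_n]$,
\[
p(\widetilde S_1,\ldots,\widetilde S_n)=p(S_1,\ldots,S_n)\vert_{\mathcal M}.
\]
This is the only identity that really needs to be checked, and it follows by linearity from the fact that a product of operators each leaving $\mathcal M$ invariant also leaves $\mathcal M$ invariant and restricts to the product of the restrictions.

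Combining the two observations, for every $p\in\mathbb C[z_1,\ldots,z_n]$,
\[
\|p(\widetilde S_1,\ldots,\widetilde S_n)\|=\|p(S_1,\ldots,S_n)\vert_{\mathcal M}\|\le \|p(S_1,\ldots,S_n)\|\le \|p\|_{\infty,\Gamma_n},
\]
which is exactly the statement that $(\widetilde S_1,\ldots,\widetilde S_n)$ is a $\Gamma_n$-contraction on $\mathcal M$. There is no real obstacle here: the proof is a one-line application of Proposition \ref{vN} combined with the standard norm-monotonicity of restriction to an invariant subspace. The only point worth pausing over is verifying that $p(S_1,\ldots,S_n)\vert_{\mathcal M}=p(\widetilde S_1,\ldots,\widetilde S_n)$, which, as noted above, is immediate once one knows $\mathcal M$ is jointly invariant for the family $\{S_1,\ldots,S_n\}$.
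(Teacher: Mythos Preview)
Your proof is correct and follows essentially the same approach as the paper: invoke Proposition~\ref{vN} to conclude that $\mathbf s(\mathbf T)$ is a $\Gamma_n$-contraction on $\m H$, then use the identity $p(S_1\vert_{\m M},\ldots,S_n\vert_{\m M})=p(S_1,\ldots,S_n)\vert_{\m M}$ together with the norm-decreasing property of restriction to an invariant subspace. The paper's proof is just the one-line chain of inequalities you wrote, with slightly less commentary on why the restriction identity holds.
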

\begin{proof}
 For a polynomial $p\in\mb C[z_1,\ldots,z_n],$ we note that
\beqa
\Vert p(s_1(\mathbf T)\vert_{\m M},\ldots, s_n(\mathbf T)\vert_{\m M})\Vert
&=&\Vert p(s_1(\mathbf T),\ldots, s_n(\mathbf T))\vert_{\m M}\Vert\\
&\leq& \Vert p(s_1(\mathbf T),\ldots, s_n(\mathbf T))\Vert\\
&\leq& \Vert p\Vert_{\infty,\Gamma_n,}
\eeqa
 since $\mathbf s(\mathbf T)=(s_1(\mathbf T),\ldots,s_n(\mathbf T))$ is a $\Gamma_n$-contraction on $\m H,$ the last
inequality holds by the previous Proposition. Hence we have the desired conclusion.
 \end{proof}

\begin{rem}
${}$

\begin{enumerate}\label{ex}
\item [1.] It is clear from the proof of the Proposition above that if $(S_1,\ldots,S_n)$ is a $\Gamma_n$-contraction on a
Hilbert space $\m H$ and $\m M$ is a common invariant subspace  for $S_i,i=1,\ldots,n,$ then
$(S_1\vert_{\m M},\ldots,S_n\vert_{\m M})$ is a $\Gamma_n$-contraction on $\m M.$

\item[2.]As an immediate consequence of Proposition \ref{vN}, we observe that the symmetrizations of the classes of
$n$-tuples of commuting contractions discussed in \cite{GKVW} give rise to a large class of $\Gamma_n$-contractions.
\item[3.]It is shown in \cite{BPS} that applying the Lemma 3.2, a large class of $\Gamma_2$-contractions can be constructed.
In an analogous way, examples of $\Gamma_n$-contractions can be constructed for any integer $n> 2$ applying Proposition \ref{res}.
\item[4.] From Theorem 3.2 in \cite{AY}, it is clear that all $\Gamma_2$-contractions are obtained by applying the Lemma 3.2
in \cite{BPS} as described in the same paper . However, for $n>3$, it is not clear whether all $\Gamma_n$-contractions have similar realizations.

\end{enumerate}
\end{rem}

\section{$\Gamma_n$-unitary and $\Gamma_n$-isometries}
We start with the following obvious generalizations of definitions in \cite{AY}.
\begin{defn}
 Let $S_1,\ldots,S_n$ be  commuting operators  on a Hilbert space $\mathcal H$. We say that $(S_1,\ldots,S_n)$ is
\begin{enumerate}
 \item [(i)] a $\Gamma_n$-{\it unitary} if $S_i, i=1,\ldots,n$ are normal operators and the joint spectrum
$\sigma(S_1,\ldots,S_n)$ of $(S_1,\ldots,S_n)$ is contained in the distinguished boundary of $\Gamma_n.$
\item[(ii)] a $\Gamma_n$-{\it isometry} if there exists a Hilbert space $\mathcal K$ containing $\mathcal H$ and
a $\Gamma_n$-unitary $(\wi S_1,\ldots,\wi S_n)$ on $\m K$ such that $\m H$ is invariant for $\wi S_i, i=1,\ldots,n$
and $S_i={\wi S_i}\vert_{\m H}, i=1,\ldots,n.$
\item[(iii)] a $\Gamma_n$-{\it co-isometry} if $(S_1^*,\ldots,S_n^*)$ is a $\Gamma$-isometry.
\item[(iv)] a \emph{pure $\Gamma_n$-isometry} if $(S_1,\ldots,S_n)$ is a $\Gamma_n$-isometry and $S_n$ is a pure isometry.
\end{enumerate}

\end{defn}

The proof of the following theorem works along the lines of Agler and Young\cite{AY}.

 \begin{thm}\label{u}
  Let $S_i, \, i=1,\ldots,n,$ be commuting operators on a Hilbert space $\mathcal H$. The following are equivalent:
  \begin{enumerate}
   \item[(i)] $(S_1,\ldots,S_n)$ is a $\Gamma_n$-unitary;
   \item[(ii)] $S_n^*S_n=I=S_nS_n^*,\, S_n^*S_i=S_{n-i}^*$ and $(\g_1S_1,\ldots,\g_{n-1}S_{n-1})$ is a $\Gamma_{n-1}$-contraction,
   where $\g_i=\frac{n-i}{n}$ for $i=1,\ldots,n-1$;
   \item[(iii)] there exist commuting unitary operators $U_i$ for $i=1,\ldots,n$ on $\mathcal H$ such that
   $$S_i=\sum_{1\leq k_1<\ldots<k_i\leq n}U_{k_1}\ldots U_{k_i}\mbox{~for~} i=1,\ldots,n.$$
  \end{enumerate}
 \end{thm}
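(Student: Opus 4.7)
The plan is to prove the cycle (iii) $\Rightarrow$ (i) $\Rightarrow$ (ii) $\Rightarrow$ (iii); the first two implications are translations of Theorem \ref{bg} through the spectral theorem for commuting normal operators.

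For (iii) $\Rightarrow$ (i), commuting unitaries $U_1,\ldots,U_n$ generate a commutative $C^*$-algebra, so each $S_i = s_i(U_1,\ldots,U_n)$ is normal, and the spectral mapping theorem gives $\sigma(S_1,\ldots,S_n) \subseteq \mathbf{s}(\sigma(U_1,\ldots,U_n)) \subseteq \mathbf{s}(\mb T^n) = b\Gamma_n$. For (i) $\Rightarrow$ (ii), the joint spectral measure $E$ of the commuting normals is supported in $b\Gamma_n$. By Theorem \ref{bg}(iii), $|s_n| = 1$ on $b\Gamma_n$, so $S_n$ is unitary. By Theorem \ref{bg}(ii), $\bar{s}_n s_i = \bar{s}_{n-i}$ there, which lifts via Borel functional calculus to $S_n^* S_i = S_{n-i}^*$; the same theorem gives $\pi(b\Gamma_n) \subseteq \Gamma_{n-1}$ for the projection $\pi(z) = (\g_1 z_1,\ldots,\g_{n-1} z_{n-1})$, so the commuting normals $(\g_1 S_1,\ldots,\g_{n-1} S_{n-1})$ have joint spectrum in $\Gamma_{n-1}$ and hence, by the spectral theorem, satisfy the $\Gamma_{n-1}$-contraction inequality.

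The substantive direction is (ii) $\Rightarrow$ (iii), which I would pattern on the Agler--Young argument for $n = 2$. Introduce the operator polynomial $P(z) = \sum_{i=0}^{n} (-1)^{n-i} S_{n-i} z^{i}$ (with $S_0 = I$); the identities in (ii), together with Fuglede's theorem (applicable because $S_n$ is normal and commutes with every $S_i$), say precisely that $P$ is self-inversive at the operator level, $z^n P(\bar z^{-1})^* = (-1)^n S_n P(z)$, while the $\Gamma_{n-1}$-contraction hypothesis on $(\g_1 S_1,\ldots,\g_{n-1} S_{n-1})$ is the operator analogue of ``all zeros of $P'$ lie in $\ov{\mb D}$.'' The plan is then to apply an operator version of Cohn's criterion (Theorem \ref{Cohn}): disintegrate $\m H = \int^{\oplus}_{\mb T} \m H_\zeta\,d\mu(\zeta)$ using the spectral decomposition of $S_n$, observe that each $S_i$ is decomposable, invoke scalar Cohn on almost every fibre to produce unit-modulus roots $\lambda_1(\zeta),\ldots,\lambda_n(\zeta)$ of the fibrewise polynomial, and then use a Borel selection---available because $\mathbf{s}: \mb T^n \to b\Gamma_n$ is a continuous finite-to-one surjection between compact metric spaces---to assemble commuting unitaries $U_1,\ldots,U_n$ with $S_i = s_i(U_1,\ldots,U_n)$.

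The main obstacle is that normality of the individual $S_i$ for $i < n$ is not part of (ii) and must be extracted from the relations. From $S_i = S_n S_{n-i}^*$ and Fuglede one finds $S_i S_i^* = S_{n-i}^* S_{n-i}$ and $S_i^* S_i = S_{n-i} S_{n-i}^*$, so normality of $S_i$ is equivalent to normality of $S_{n-i}$; combining this involutive symmetry with the $\Gamma_{n-1}$-contraction bound---tested against appropriately chosen symmetric polynomials in the $\g_i S_i$ so as to control the self-commutators $S_i S_i^* - S_i^* S_i$---is the step I expect to require the most care, since it is the operator-level analogue of Cohn's passage from ``$P'$ has roots in $\ov{\mb D}$'' to ``$P$ has roots on $\mb T$,'' and it is what validates the fibrewise decomposition and Borel selection used to construct the $U_k$.
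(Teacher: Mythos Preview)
Your handling of (iii) $\Rightarrow$ (i) and (i) $\Rightarrow$ (ii) is fine and matches the paper's use of the spectral theorem together with Theorem \ref{bg}. The gap is in (ii) $\Rightarrow$ (iii), at precisely the point you flag as ``the step I expect to require the most care'': establishing normality of each $S_i$. You observe $S_iS_i^{*}=S_{n-i}^{*}S_{n-i}$ and $S_i^{*}S_i=S_{n-i}S_{n-i}^{*}$ and then propose to close the loop by testing the $\Gamma_{n-1}$-contraction inequality against cleverly chosen polynomials to control self-commutators. That detour is unnecessary and, as stated, not clearly workable. The missing one-line observation is to use the relation \emph{twice}, once with index $i$ and once with index $n-i$: since $S_{n-i}^{*}=S_n^{*}S_i$ and $S_i^{*}=S_n^{*}S_{n-i}$, commutativity of $S_i$ with $S_{n-i}$ gives
\[
S_{n-i}^{*}S_{n-i}=S_n^{*}S_iS_{n-i}=S_n^{*}S_{n-i}S_i=S_i^{*}S_i,
\]
which combined with your Fuglede identity $S_iS_i^{*}=S_{n-i}^{*}S_{n-i}$ yields $S_iS_i^{*}=S_i^{*}S_i$ directly. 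No appeal to the $\Gamma_{n-1}$-contraction hypothesis is needed for normality; that hypothesis is only used afterwards to locate the joint spectrum.

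Once normality is in hand, your direct-integral disintegration over $\sigma(S_n)$, fibrewise Cohn, and Borel selection would ultimately work, but it is heavier than necessary. The paper instead routes (ii) through (i): since all $S_i$ are normal and commute, $C^{*}(S_1,\ldots,S_n)$ is commutative, and the Gelfand transform carries the hypotheses of (ii) to the scalar identities $|s_n|=1$, $\bar s_ns_i=\bar s_{n-i}$, $(\g_1s_1,\ldots,\g_{n-1}s_{n-1})\in\Gamma_{n-1}$ at every point of $\sigma(S_1,\ldots,S_n)$, whence Theorem \ref{bg} places the joint spectrum in $b\Gamma_n$. This recovers (i), and your own (i) $\Rightarrow$ (iii) argument---or the paper's, which builds the $U_k$ by integrating a measurable right inverse of $\mathbf s|_{\mb T^n}$ against the joint spectral measure---finishes the cycle. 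Your Borel-selection idea is essentially this measurable right inverse, but applied after the joint spectral picture is already available rather than reconstructed fibre by fibre over $\sigma(S_n)$.
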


\begin{proof}Suppose (i) holds. Let $(S_1,\ldots, S_n)$ be a $\Gamma_n$-unitary. By the spectral theorem for commuting normal operators, there exists a spectral measure $M(\cdot)$ on $\sigma(S_1,\ldots,S_n)$ such that
 $$
 S_i = \int_{\sigma(S_1,\ldots,S_n)}s_i(\bl z) M(d\bl z),\, i=1,\ldots,n,
 $$
 where $s_1,...,s_n$ are the co-ordinate functions on $\C^n$. Let $\tau$ be a measurable right inverse of the restriction of $\bl s$ to $\mathbb T^n$, so that $\tau$ maps distinguished boundary of $\Gamma_n$ to $\mathbb T^n$. Let $\tau = (\tau_1,\ldots,\tau_n)$ and
 $$
 U_i = \int_{\sigma(S_1,\ldots,S_n)}\tau_i(\bl z) M(d\bl z),\, i=1,\ldots,n.
 $$
 Clearly $U_1,\ldots,U_n$ are commuting unitaries on $\mathcal H$ and
 \beqa
 \sum_{1\leq k_1<\ldots<k_i\leq n}U_{k_1}\ldots U_{k_i} &=& \sum_{1\leq k_1<\ldots<k_i\leq n}\int_{\sigma(S_1,\ldots,S_n)}\tau_{k_1}(\bl z)\ldots \tau_{k_i}(\bl z) M(dz)\\ &=& \int_{\sigma(S_1,\ldots,S_n)}s_i(\bl z) M(d\bl z)\\ &=& S_i,
 \eeqa
 for $i=1,\ldots,n$. Hence (i) implies (iii).

 Suppose (iii) holds. Then  $S_n^*S_n=I=S_nS_n^*$ and $ S_n^*S_i=S_{n-i}^*, i=1,\ldots,n$ follow immediately. Moreover, since
 $(S_1,\ldots,S_n)$ is a $\Gamma_n$-contraction, we have from Lemma \ref{projection} that $(\g_1S_1,\ldots,\g_{n-1}S_{n-1})$ is a $\Gamma_{n-1}$-contraction, where $\g_i=\frac{n-i}{n}$ for $i=1,\ldots,n-1.$ Hence (iii) implies (ii).

 Suppose (ii) holds. Since $S_n$ is normal, by Fuglede's theorem $S_{n-i}^*S_{n-i} = S_n^*S_iS_i^*S_n = S_iS_i^*$. Now as $S_i$'s commute, $S_{n-i}^*S_{n-i} = S_n^*S_iS_{n-i} = S_n^*S_{n-i}S_i =S_i^*S_i$ and we have each of $S_i, i= 1,\ldots,n$ is normal. So the unital $C^*$-algebra $C^*(S_1,\ldots, S_n)$ generated by $S_1,\ldots, S_n$
 is commutative and by Gelfand-Naimark's theorem is $*$-isometrically isomorphic to $C(\sigma(S_1,\ldots, S_n))$. Let ${\hat S}_1,\ldots,{\hat S}_n$ be the images of $S_1,\ldots, S_n$ under the Gelfand map. By definition, for an arbitrary point ${\bl z} = (s_1,,\ldots, s_n)$ in $\sigma(S_1,\ldots, S_n)$, ${\hat S}_i({\bl z}) = s_i$ for $i=1,\ldots,n$. By properties of the Gelfand map and hypothesis we have,
$$
 \overline{\hat S_n({\bl z})}\hat S_n({\bl z})= 1 = \hat S_n({\bl z}) \overline{\hat S_n({\bl z})}\mbox{~and~} \overline{\hat S_n({\bl z})}\hat S_i({\bl z}) = \overline{\hat S_{n -i}({\bl z})}\mbox{~for~} \bl z \mbox{~in~} \sigma(S_1,\ldots, S_n).
 $$
 Thus we obtain $\vert s_n\vert=1, \bar s_n s_i=\bar s_{n-i}$. Now $(\g_1S_1,\ldots,\g_{n-1}S_{n-1})$ is a $\Gamma_{n-1}$-contraction implies
 $$
 \Vert p(\g_1S_1,\ldots,\g_{n-1}S_{n-1})\Vert \leq \Vert p\Vert_{\infty,\Gamma_{n-1}}
 $$
 which is equivalent to $\Vert p\Vert_{\infty,\Gamma_{n-1}}^2 - p(\g_1S_1,\ldots,\g_{n-1}S_{n-1})^*p(\g_1S_1,\ldots,\g_{n-1}S_{n-1})$ is positive. Applying Gelfand transform we have
 $$
 \Vert p\Vert_{\infty,\Gamma_{n-1}}^2 - p(\g_1\hat S_1({\bl z}),\ldots,\g_{n-1}\hat S_{n-1}({\bl z}))^*p(\g_1\hat S_1({\bl z}),\ldots,\g_{n-1}\hat S_{n-1}({\bl z}))\geq 0
 $$ for $\bl z$ in $\sigma(S_1,\ldots, S_n)$. This shows $(\g_1s_1,\ldots,\g_{n-1}s_{n-1})$ is in the polynomially convex hull of $\Gamma_{n-1}$. Since $\Gamma_{n-1}$ is polynomially convex, $(\g_1s_1,\ldots,\g_{n-1}s_{n-1})$ is in $\Gamma_{n-1}$. Therefore by Theorem \ref{bg} $(s_1,\ldots,s_n)$ is in the distinguished boundary of $\Gamma_n$ and hence $\sigma(S_1,\ldots, S_n)\subset b\Gamma_n$. This proves (ii) implies (i).
 \end{proof}

It  is not a priori clear  whether unitarity of $S_n$ would imply that $(S_1,\ldots,S_n)$ to be $\Gamma_n$-unitary. However, the following is true.

 \begin{lem}
  Let ${\bf T} = (T_1,\ldots,T_n)$ be a tuple of commuting contractions on a Hilbert space $\mathcal H$. If $s_n{(\bf T)} {(= \prod_{i=1}^nT_i)}$ is a unitary, then ${\bl s}(\bf T)$ is a $\Gamma_n$-unitary.
 \end{lem}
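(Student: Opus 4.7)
The plan is to first show that each individual $T_i$ must be unitary, and then to invoke the implication (iii) $\Rightarrow$ (i) of Theorem \ref{u} with $U_i = T_i$.

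Set $U := s_n(\mathbf{T}) = T_1 T_2 \cdots T_n$, which is unitary by hypothesis. Fix $j \in \{1, \ldots, n\}$ and, using commutativity, factor $U = T_j V_j$ where $V_j := \prod_{i \neq j} T_i$ is a product of commuting contractions. Since $U$ is an isometry and both $T_j$ and $V_j$ are contractions,
$$
\|h\| = \|Uh\| = \|T_j V_j h\| \leq \|V_j h\| \leq \|h\| \qquad \text{for every } h \in \m H,
$$
forcing equality throughout; in particular $V_j$ is itself an isometry. Iterating the same factorisation with $V_j$ in place of $U$ (the argument only requires the ``outer'' operator to be an isometry and the remaining factors to be contractions), a downward induction on the number of factors shows that every product over a nonempty subset of $\{T_1, \ldots, T_n\}$ is an isometry. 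Specialising to singletons, each $T_i$ is an isometry.

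Applying exactly the same procedure to $U^* = \prod_i T_i^*$, which is again unitary and a product of commuting contractions, I obtain that each $T_i^*$ is an isometry, i.e., each $T_i$ is a co-isometry. Hence each $T_i$ is unitary. The operators $T_1, \ldots, T_n$ are therefore commuting unitaries on $\m H$ satisfying
$$
s_i(\mathbf{T}) = \sum_{1 \leq k_1 < \cdots < k_i \leq n} T_{k_1} \cdots T_{k_i}, \qquad i = 1, \ldots, n,
$$
which is precisely condition (iii) of Theorem \ref{u} with $U_i = T_i$. That theorem then delivers the conclusion that $\mathbf{s}(\mathbf{T})$ is a $\Gamma_n$-unitary.

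The only genuine obstacle is the first step, namely promoting ``the product is unitary'' to ``every factor is unitary''. Commutativity is used in an essential way to place any chosen $T_j$ at the end of the product, and the contractivity of the remaining factors is what collapses the chain of norm inequalities to equalities. After that, the passage to $\Gamma_n$-unitarity is immediate from the characterisation already established.
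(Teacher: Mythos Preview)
Your proof is correct, and the overall strategy coincides with the paper's: reduce to showing that each $T_i$ is unitary, then invoke condition (iii) of Theorem \ref{u}. The execution of the key step, however, is different. The paper first observes that each $T_i$ is invertible, then argues by contradiction that $\|T_i^{-1}\|=1$ (using that otherwise $\|s_n(\mathbf T)^{-1}\|>1$), and from this deduces that each $T_i$ is an isometry; the co-isometry part is obtained by repeating the argument for $\mathbf T^*$. Your approach bypasses invertibility and the norm-of-inverse computation entirely: the chain $\|h\|=\|Uh\|=\|T_jV_jh\|\le\|V_jh\|\le\|h\|$ immediately forces every partial product to be an isometry, and a downward induction on the number of factors yields isometry of each $T_i$ directly. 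This is a genuinely more elementary route to the same conclusion; the paper's argument, on the other hand, makes the role of invertibility explicit, which some readers may find conceptually clarifying. Either way, once each $T_i$ is unitary the appeal to Theorem \ref{u} is identical.
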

\begin{proof}
 Note that each of the $T_i,\, i=1,\ldots,n$ is invertible as they are commuting with each other and their product is unitary. Since each $T_i$ is a contraction, we have $\|T_i^{-1}\| \geq 1$. First we show that $\|T_i^{-1}\| = 1$ for all $i=1,\ldots,n$. If not, then for some $k$ in $\{1,\ldots,n\}$ we have $\|T_k^{-1}\| > 1$. So, there exist a $y$ in $\mathcal H$ such that $\|T_k^{-1}y\| > \|y\|$. Let $(\prod_{i\neq k}^nT_i)y = x$. Thus
 $$
 \|s_n{(\bf T)}^{-1}\|\geq\frac{\|s_n{(\bf T)}^{-1}x\|}{\|x\|} = \frac{\|T_k^{-1}y\|}{ \|y\|}\cdot \frac{\|y\|}{\|(\prod_{i\neq k}^nT_i)y\|} > 1
 $$ is a contradiction as by hypothesis $s_n{(\bf T)}^{-1}$ is also a unitary. Next we show that each $T_i$ is isometry, that is, $T_i^*T_i = I,\, i=1,\ldots,n$. Suppose for some $\ell$ in $\{1,\ldots,n\}$, $T_\ell^*T_\ell \neq I$. There exists nonzero $y$ in $\mathcal H$ such that $(I - T_\ell^*T_\ell)y\neq 0$. Since $T_\ell$ is a contraction, $\|y\|^2 - \|T_\ell y\|^2 = \langle (I - T_\ell^*T_\ell)y, y\rangle = \|(I - T_\ell^*T_\ell)^{\frac{1}{2}}y\|^2 > 0$. But this shows, $\|T_\ell^{-1}\| > 1$ which is a contradiction. Thus $T_i^*T_i = I,\, i=1,\ldots,n$. Applying the same trick to ${\bf T}^* = (T_1^*,\ldots,T_n^*)$, we see that each $T_i$ is a unitary. Hence by part (iii) of the Theorem above,  ${\bl s}(\bf T)$ is a $\Gamma_n$-unitary.
\end{proof}

The following Lemma will be useful in characterizing pure $\Gamma_n$-isometry.

\begin{lem}\label{taut}
 Let $\Phi_1,\ldots,\Phi_n$ be functions in $L^\infty\mathcal L(\mathcal E)$ and $M_{\Phi_i},\,i=1\ldots,n$, denotes the corresponding multiplication operator on $L^2(\mathcal E)$. Then $(M_{\Phi_1},\ldots,M_{\Phi_n})$ is a $\Gamma_n$ contraction if and only if $(\Phi_1(z),\ldots,\Phi_n(z))$ is a $\Gamma_n$ contraction for all $z$ in $\mathbb T$.
\end{lem}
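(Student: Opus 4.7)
The plan is to reduce both directions to the standard norm formula
$$
\|M_\Psi\|_{\m L(L^2(\m E))}=\operatorname*{ess\,sup}_{z\in\mb T}\|\Psi(z)\|_{\m L(\m E)},\qquad \Psi\in L^\infty\m L(\m E),
$$
combined with the fact that two multiplication operators $M_{\Psi_1},M_{\Psi_2}$ commute on $L^2(\m E)$ if and only if $\Psi_1(z)\Psi_2(z)=\Psi_2(z)\Psi_1(z)$ for a.e. $z\in\mb T$. Under the resulting a.e. pointwise commutativity one has the identity
$$
p(M_{\Phi_1},\ldots,M_{\Phi_n})=M_{p(\Phi_1(\cdot),\ldots,\Phi_n(\cdot))}
$$
for every polynomial $p\in\mb C[z_1,\ldots,z_n]$, and hence
$$
\|p(M_{\Phi_1},\ldots,M_{\Phi_n})\|=\operatorname*{ess\,sup}_{z\in\mb T}\|p(\Phi_1(z),\ldots,\Phi_n(z))\|.
$$

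The ``if'' direction is then immediate: if $(\Phi_1(z),\ldots,\Phi_n(z))$ is a $\Gamma_n$-contraction for a.e. $z$, then the function $z\mapsto\|p(\Phi_1(z),\ldots,\Phi_n(z))\|$ is a.e. bounded by $\|p\|_{\infty,\Gamma_n}$, so the displayed formula yields $\|p(M_{\Phi_1},\ldots,M_{\Phi_n})\|\leq\|p\|_{\infty,\Gamma_n}$, giving $\Gamma_n$-contractivity.

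For the converse the main technical point, and the step I expect to be the chief obstacle, is a quantifier exchange: the hypothesis yields, for each fixed $p$, a null set $N_p\subset\mb T$ off which the desired pointwise estimate holds, but we require a single null set that works for all $p$ simultaneously. This is handled by separability. Choose a countable $\mb Q+i\mb Q$-coefficient dense subset $\m P\subset\mb C[z_1,\ldots,z_n]$, and for each pair $i<j$ select a null set $N_{ij}$ outside which $\Phi_i(z)\Phi_j(z)=\Phi_j(z)\Phi_i(z)$ (which exists thanks to the assumed commutativity of the $M_{\Phi_i}$). Set
$$
N=\bigcup_{p\in\m P}N_p\,\cup\,\bigcup_{i<j}N_{ij},
$$
still a null set. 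For $z\notin N$ the operators $\Phi_1(z),\ldots,\Phi_n(z)$ commute pairwise and satisfy $\|p(\Phi_1(z),\ldots,\Phi_n(z))\|\leq\|p\|_{\infty,\Gamma_n}$ for every $p\in\m P$. Fixing such a $z$, both sides of this inequality depend continuously on the coefficients of $p$ on each subspace of polynomials of bounded degree (the left because the $\Phi_i(z)$ are bounded operators, the right because $\Gamma_n$ is compact), so by density the inequality persists for all $p\in\mb C[z_1,\ldots,z_n]$, showing that $(\Phi_1(z),\ldots,\Phi_n(z))$ is a $\Gamma_n$-contraction for a.e. $z\in\mb T$.
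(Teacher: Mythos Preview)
Your argument is correct and follows the same route as the paper: both reduce the question to the identity $p(M_{\Phi_1},\ldots,M_{\Phi_n})=M_{p(\Phi_1,\ldots,\Phi_n)}$ together with the norm formula $\|M_\Psi\|=\|\Psi\|_\infty$. The paper's proof is in fact briefer than yours; it simply records these two facts and asserts the equivalence, writing $\sup_{z\in\mb T}$ in place of an essential supremum and not mentioning either the pointwise commutativity of the $\Phi_i(z)$ or the quantifier exchange. Your added care on these points (extracting a single null set via a countable dense family of polynomials, and checking that commutativity passes between the $M_{\Phi_i}$ and the values $\Phi_i(z)$) is a genuine improvement in rigor over the paper's version, not a different method.
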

\begin{proof} Note that for $\|M_{\Psi}\| = \|\Psi\|_{\infty}$ for $\Psi\in L^\infty\mathcal L(\mathcal E)$. By definition,
$(M_{\Phi_1},\ldots,M_{\Phi_n})$ is a $\Gamma_n$ contraction if and only if
\beq\label{op}
\|p(M_{\Phi_1},\ldots,M_{\Phi_n})\| \leq \Vert p\Vert_{\infty,\Gamma_n,}
\eeq for all polynomials $p\in \mb C[z_1,\ldots,z_n]$. Since $p(M_{\Phi_1},\ldots,M_{\Phi_n}) = M_{p(\Phi_1,\ldots,\Phi_n)}$ and $\|M_{\Psi}\| = \|\Psi\|_{\infty}:= \sup_{z\in\mathbb T} \|\Psi(z)\|$ for $\Psi\in L^\infty\mathcal L(\mathcal E)$, we have \eqref{op} is true if and only if
$$
\|p(\Phi_1(z),\ldots,\Phi_n(z))\| \leq \Vert p\Vert_{\infty,\Gamma_n,}
$$ which completes the proof.
\end{proof}

\begin{rem}
 An interesting case in the Lemma above, is when $\dim \mathcal E = 1$. In this case, the $n$-tuple $(\Phi_1(z),\ldots,\Phi_n(z))$ is a $\Gamma_n$-contraction means that $(\Phi_1(z),\ldots,\Phi_n(z))$ is in $\Gamma_n$ which is true as $\Gamma_n$ is polynomially convex.
\end{rem}

 \begin{thm}\label{p.i}
  Let $S_i, \, i=1,\ldots,n,$ be commuting operators on a Hilbert space $\mathcal H$. Then $(S_1,\ldots, S_n)$ is a pure $\Gamma_n$-isometry if and only if there exist a separable Hilbert space $\mathcal E$ and a unitary operator $U: \mathcal H\ra H^2(\mathcal E)$ and functions $\Phi_1,\ldots,\Phi_{n-1}$ in $H^\infty \mathcal L(\mathcal E)$ and operators $A_i \in \mathcal L(\mathcal E)\,i=1,\ldots,n-1$ such that
  \begin{enumerate}
   \item[(i)] $S_i = U^*M_{\Phi_i}U,\, i=1,\ldots,n-1,\, S_n = U^*M_zU$;
   \item[(ii)] $(\g_1\Phi_1(z),\ldots,\g_{n-1}\Phi_{n-1}(z))$ is a $\Gamma_{n-1}$-contraction for all $z$ in $\mathbb T$, where $\g_i=\frac{n-i}{n}$ for $i=1,\ldots,n-1$;
   \item[(iii)] $\Phi_i(z) = A_i + A_{n-i}^*z$ for $1\leq i \leq n-1$;
   \item[(iv)] $[A_i, A_j] = 0$ and $[A_i, A_{n-j}^*] = [A_j, A_{n-i}^*]$ for $ 1\leq i,j \leq n-1,$ where $[P,Q]=PQ-QP$ for two operators $P,Q.$
  \end{enumerate}

 \end{thm}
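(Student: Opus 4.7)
The plan is to prove both directions, with necessity being the more substantive one. A key preliminary, used below and implicit in Theorem \ref{u}, is that any $\Gamma_n$-isometry satisfies $S_n^* S_i = S_{n-i}^*$ for $i = 1, \ldots, n-1$: if $(\tilde{S}_1, \ldots, \tilde{S}_n)$ is a $\Gamma_n$-unitary extension of $(S_1, \ldots, S_n)$ to $\mathcal{K} \supset \mathcal{H}$, then for $h \in \mathcal{H}$ invariance gives $\tilde{S}_i h = S_i h \in \mathcal{H}$, so $\tilde{S}_n^* \tilde{S}_i h = \tilde{S}_{n-i}^* h$ by Theorem \ref{u}(ii), and projecting to $\mathcal{H}$ yields $S_n^* S_i h = S_{n-i}^* h$. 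Similarly, any $\Gamma_n$-isometry is a $\Gamma_n$-contraction, so Lemma \ref{projection} provides that $(\gamma_1 S_1, \ldots, \gamma_{n-1} S_{n-1})$ is a $\Gamma_{n-1}$-contraction.

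For the only-if direction, since $S_n$ is a pure isometry, the Wold decomposition produces a unitary $U: \mathcal{H} \to H^2(\mathcal{E})$, with $\mathcal{E} = \ker S_n^*$, identifying $S_n$ with $M_z$. Each $T_i := U S_i U^*$ commutes with $M_z$, so by the classical description of the commutant of the shift on $H^2(\mathcal{E})$ we obtain $T_i = M_{\Phi_i}$ for a unique $\Phi_i \in H^\infty \mathcal{L}(\mathcal{E})$, giving (i). To force (iii), transport $S_n^* S_i = S_{n-i}^*$ through $U$ to obtain $M_z^* M_{\Phi_i} = M_{\Phi_{n-i}}^*$ and evaluate both sides at a constant vector $e \in \mathcal{E}$. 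Writing $\Phi_i(z) = \sum_{k \geq 0} A_{i,k} z^k$, the left side equals the function $z \mapsto \sum_{k \geq 0} A_{i,k+1} (e)\, z^k$ (backward shift), while $M_{\Phi_{n-i}}^* e = P_+(\Phi_{n-i}^* e) = A_{n-i,0}^* e$, since the projection kills all negative Fourier modes. Matching coefficients forces $A_{i,k} = 0$ for $k \geq 2$ and $A_{i,1} = A_{n-i,0}^*$, whence $\Phi_i(z) = A_i + A_{n-i}^* z$ with $A_i := A_{i,0}$.

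Condition (ii) then follows by transporting the preliminary $\Gamma_{n-1}$-contractivity to $H^2(\mathcal{E})$ and using that $\|M_\Psi\|_{H^2} = \|\Psi\|_{L^\infty}$ for analytic symbols; the argument of Lemma \ref{taut} yields $\sup_z \|p(\gamma_1 \Phi_1(z), \ldots, \gamma_{n-1} \Phi_{n-1}(z))\| \leq \|p\|_{\infty, \Gamma_{n-1}}$ for every polynomial $p$, and continuity of the (degree one) $\Phi_i$ gives the pointwise $\Gamma_{n-1}$-contraction condition at every $z \in \mathbb{T}$. For (iv), commutativity of the $M_{\Phi_i}$'s is equivalent to $\Phi_i(z)\Phi_j(z) = \Phi_j(z)\Phi_i(z)$ pointwise; expanding these degree-one polynomials and matching the coefficients of $1, z, z^2$ gives precisely $[A_i, A_j] = 0$ and $[A_i, A_{n-j}^*] = [A_j, A_{n-i}^*]$.

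For the converse, extend each $\Phi_i(z) = A_i + A_{n-i}^* z$ trivially to $L^\infty \mathcal{L}(\mathcal{E})$ and verify via Theorem \ref{u}(ii) that $(M_{\Phi_1}, \ldots, M_{\Phi_{n-1}}, M_z)$ is a $\Gamma_n$-unitary on $L^2(\mathcal{E})$: commutativity from (iv); $M_z$ is unitary on $L^2(\mathcal{E})$; a direct computation gives $M_z^* M_{\Phi_i} = M_{A_i \bar z + A_{n-i}^*} = M_{\Phi_{n-i}^*} = M_{\Phi_{n-i}}^*$; and the $\Gamma_{n-1}$-contractivity of $(\gamma_1 M_{\Phi_1}, \ldots, \gamma_{n-1} M_{\Phi_{n-1}})$ follows from (ii) by Lemma \ref{taut}. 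Since the analytic symbols leave $H^2(\mathcal{E})$ invariant, restriction yields a $\Gamma_n$-isometry on $H^2(\mathcal{E})$, and transferring back via $U^*$ gives a $\Gamma_n$-isometry $(S_1, \ldots, S_n)$ whose last component is unitarily equivalent to the shift and is therefore pure. The main obstacle is the Fourier-coefficient computation forcing $\Phi_i$ to be of degree at most one in $z$; the remainder is a sequence of routine identifications together with applications of Theorem \ref{u} and Lemma \ref{taut}.
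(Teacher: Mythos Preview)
Your proof is correct and follows essentially the same route as the paper's: derive the relations $S_n^*S_i=S_{n-i}^*$ from the $\Gamma_n$-unitary extension via Theorem~\ref{u}, use the Wold model for the pure isometry $S_n$ and the commutant of the shift to get the $\Phi_i$, force $\Phi_i(z)=A_i+A_{n-i}^*z$ from $S_n^*S_i=S_{n-i}^*$ by a Fourier-coefficient comparison, read off (ii) from Lemma~\ref{projection} together with the argument of Lemma~\ref{taut}, and obtain (iv) from commutativity; for the converse, verify on $L^2(\mathcal E)$ that $(M_{\Phi_1},\ldots,M_{\Phi_{n-1}},M_z)$ is a $\Gamma_n$-unitary via Theorem~\ref{u}(ii) and restrict. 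The only cosmetic difference is that you extract the coefficient relations for (iii) by applying the operator identity to constant vectors in $\mathcal E$, whereas the paper passes to the symbol equation $\bar z\,\Phi_i=\Phi_{n-i}^*$ on $\mathbb T$ and compares Fourier coefficients there; both arguments yield the same conclusion.
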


  \begin{proof}
 Suppose $(S_1,\ldots, S_n)$ is a pure $\Gamma_n$-isometry. By definition, there exist a Hilbert space $\mathcal K$ and a $\Gamma_n$-unitary $(\tilde S_1,\ldots, \tilde S_n)$ such that $\mathcal H\subset\mathcal K$ is a common invariant subspace of $\tilde S_i$'s and $S_i = \tilde S_i|_{\mathcal H},\, i=1,\ldots,n$. From Theorem \ref{u}, it follows that ${\tilde S}_n^*{\tilde S}_n=I$ and
 ${\tilde S}_n^*{\tilde S}_i={\tilde S}_{n-i}^*, \, 1\leq i,j\leq n-1.$
 By compression of $\tilde S_i$ to $\mathcal H$, we have
 $$
 S_n^*S_n=I \mbox{~and~} S_n^*S_i=S_{n-i}^*, \, 1\leq i\leq n-1.
 $$
 Since $S_n$ is a pure isometry and $\mathcal H$ is separable, there exist a unitary operator $U: \mathcal H\ra H^2(\mathcal E)$, for some separable Hilbert space $\mathcal E$, such that $S_n = U^*M_zU$, where $M_z$ is the shift operator on $H^2(\mathcal E)$. Since $S_i$'s commute with $S_n$, there exist $\Phi_i\in H^\infty \mathcal L(\mathcal E)$ such that $S_i = U^*M_{\Phi_i}U$ for $1\leq i\leq n-1$. As $(S_1,\ldots, S_n)$ is a $\Gamma_n$-contraction, from Lemma \ref{projection} and Lemma \ref{taut} part (ii) follows. The relations $S_n^*S_i=S_{n-i}^*$ yield
 $$
 M_{\bar z} M_{\Phi_i} = M_{\Phi_{n-i}}^*,\, 1\leq i\leq n-1.
 $$
 Let $\Phi(z) = \sum_{n\geq 0}C_nz^n,\,\Psi(z) = \sum_{n\geq 0}D_nz^n$ be in $H^\infty \mathcal L(\mathcal E)$. Then $M_{\bar z} M_{\Phi} = M_{\Psi}^*$ implies
 $$
 C_0\bar z +C_1 + \sum_{n\geq 2}C_nz^{n-1} = D_0^*+D_1^*\bar z + \sum_{n\geq 2} D_n^*\bar z^n
 $$
 for all $z\in\mathbb T$ and by comparing coefficients, we get
 $$
 C_0 = D_1^* \mbox{~and~} C_1 =D_0^*.
$$ This gives that each $\Phi_i(z)$ is of the form $A_i + B_iz$ for some $A_i,B_i\in\mathcal L(\mathcal E)$, where $A_i = B_{n-i}^*$  for $1\leq i \leq n-1$, which is part (iii). Now since $S_i$'s commutes, we have  $M_{\Phi_i}M_{\Phi_j} = M_{\Phi_j}M_{\Phi_i}$ and consequently
$$
(A_i + A_{n-i}^*z)(A_j + A_{n-j}^*z) = (A_j + A_{n-j}^*z)(A_i + A_{n-i}^*z)
$$
for $1\leq i,j\leq n-1$ and for all $z\in\mathbb T$. Comparing the constant term and the coefficients of $z$, we get part (iv).

Conversely, suppose $(S_1,\ldots,S_n)$ be the $n$-tuple satisfying conditions (i) to (iv). Consider the $n$-tuple $(M_{\Phi_1},\ldots,M_{\Phi_{n-1}},M_z)$ of multiplication operators on $L^2(\mathcal E)$ with symbols $\Phi_1,\ldots,\Phi_{n-1},z$ respectively. From condition (iv), it follows that $M_{\Phi_i}$'s commutes with each other. Part of condition (iii) shows that $M_{\bar z} M_{\Phi_i} = M_{\Phi_{n-i}}^*$ by repeating calculations similar to above. Thus, it is easy to see from part (ii) of Theorem \ref{u}, that $(M_{\Phi_1},\ldots,M_{\Phi_{n-1}},M_z)$ is a $\Gamma_n$-unitary and so is $(U^*M_{\Phi_1}U,\ldots,U^*M_{\Phi_{n-1}}U,U^*M_zU)$. Now, $S_i$'s, $1\leq i\leq n$, are the restrictions to the common invariant subspace $H^2(\mathcal E)$ of $(M_{\Phi_1},\ldots,M_{\Phi_{n-1}},M_z)$ and hence $(S_1,\ldots,S_n)$ is a $\Gamma_n$-isometry. Since $S_n$ is a shift, $(S_1,\ldots,S_n)$ is a pure $\Gamma_n$-isometry.
 \end{proof}

 Next, we obtain characterization for $\Gamma_n$-isometry analogous to the Wold decomposition in terms of $\Gamma_n$-unitaries and pure $\Gamma_n$-isometries using Theorem \ref{u} and Theorem \ref{p.i}. We prove this along the way of Agler and Young \cite{AY} and will use the following Lemma \cite[Lemma 2.5]{AY}.
 \begin{lem}\label{ay}
  Let $U$ and $V$ be a unitary and a pure isometry on Hilbert space $\mathcal H_1, \mathcal H_2$ respectively, and let $T:\mathcal H_1\ra\mathcal H_2$ be an operator such that $TU = VT$. Then $T = 0$.
 \end{lem}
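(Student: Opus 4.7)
The plan is to use the intertwining relation to force $T\mathcal{H}_1$ into the intersection $\bigcap_{n \geq 0} V^n \mathcal{H}_2$, and then invoke purity of $V$ to conclude this intersection is trivial.

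First I would iterate the relation $TU = VT$: a straightforward induction yields $TU^n = V^n T$ for every $n \geq 0$. Here the hypothesis that $U$ is unitary plays its essential role. Since $U^n$ is a bijection of $\mathcal{H}_1$, we have $U^n \mathcal{H}_1 = \mathcal{H}_1$, and consequently
$$T\mathcal{H}_1 \;=\; T(U^n \mathcal{H}_1) \;=\; V^n\bigl(T\mathcal{H}_1\bigr) \;\subseteq\; V^n \mathcal{H}_2$$
for every $n \geq 0$. Intersecting over $n$ gives $T\mathcal{H}_1 \subseteq \bigcap_{n \geq 0} V^n \mathcal{H}_2$.

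Second, I would invoke the Wold decomposition: a pure isometry $V$ on $\mathcal{H}_2$ is characterized by the property $\bigcap_{n \geq 0} V^n \mathcal{H}_2 = \{0\}$ (equivalently, $V^{*n} \to 0$ in the strong operator topology). Combining this with the inclusion above yields $T\mathcal{H}_1 = \{0\}$, i.e.\ $T = 0$.

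I do not anticipate any real obstacle here: the argument is one step once one notices that invertibility of $U$ allows one to replace $\mathcal{H}_1$ by $U^n\mathcal{H}_1$ on the left of the intertwining identity, while purity of $V$ is precisely the condition that kills the nested ranges on the right. The only mild point to be careful about is the correct direction of iteration (one wants $V^n\mathcal{H}_2$, not $V^{*n}\mathcal{H}_2$), but this is forced by the form $TU = VT$ rather than $TU^* = V^*T$.
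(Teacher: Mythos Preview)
Your argument is correct: iterating the intertwining relation and using surjectivity of $U$ to get $T\mathcal{H}_1 \subseteq V^n\mathcal{H}_2$ for all $n$, then invoking purity of $V$, is exactly the standard proof. The paper itself does not prove this lemma at all; it simply quotes it as \cite[Lemma~2.5]{AY}, so there is nothing further to compare.
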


 \begin{thm}\label{wold}
  Let $S_i, \, i=1,\ldots,n,$ be commuting operators on a Hilbert space $\mathcal H$. The following are equivalent:
  \begin{enumerate}
   \item [(i)] $(S_1,\ldots, S_n)$ is a $\Gamma_n$-isometry;
   \item [(ii)] There exists a orthogonal decomposition $\mathcal H = \mathcal H_1\oplus\mathcal H_2$ into common reducing subspaces of $S_i$'s, $i=1,\ldots,n$ such that $(S_1|_{{\mathcal H}_1},\ldots, S_n|_{{\mathcal H}_1})$ is a $\Gamma_n$-unitary and $(S_1|_{{\mathcal H}_2},\ldots, S_n|_{{\mathcal H}_2})$ is a pure $\Gamma_n$-isometry;
   \item [(iii)] $S_n^*S_n=I,\, S_n^*S_i=S_{n-i}^*$ and $(\g_1S_1,\ldots,\g_{n-1}S_{n-1})$ is a $\Gamma_{n-1}$-contraction,
   where $\g_i=\frac{n-i}{n}$ for $i=1,\ldots,n-1.$
  \end{enumerate}
\end{thm}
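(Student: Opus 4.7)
My plan is to establish the cycle $(ii) \Rightarrow (i) \Rightarrow (iii) \Rightarrow (ii)$. The implication $(ii) \Rightarrow (i)$ is immediate: a $\Gamma_n$-unitary is a $\Gamma_n$-isometry (with itself as dilation), a pure $\Gamma_n$-isometry is a $\Gamma_n$-isometry by definition, and the orthogonal direct sum of the two underlying $\Gamma_n$-unitary dilations is a $\Gamma_n$-unitary dilation of $(S_1,\ldots,S_n)$ on a space containing $\m H = \m H_1 \oplus \m H_2$. For $(i) \Rightarrow (iii)$, let $(\wi S_1,\ldots,\wi S_n)$ on $\m K \supseteq \m H$ be a $\Gamma_n$-unitary extension. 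By Theorem \ref{u}, $\wi S_n^* \wi S_n = I$, $\wi S_n^* \wi S_i = \wi S_{n-i}^*$, and $(\g_1 \wi S_1,\ldots,\g_{n-1} \wi S_{n-1})$ is a $\Gamma_{n-1}$-contraction on $\m K$. Since $\m H$ is invariant for each $\wi S_i$, compressing $\wi S_n^* \wi S_i$ to $\m H$ yields $S_n^* S_i = S_{n-i}^*$ (and $S_n^* S_n = I$), and part~1 of Remark \ref{ex} applied to $(\g_1 \wi S_1,\ldots,\g_{n-1} \wi S_{n-1})$ gives the $\Gamma_{n-1}$-contraction property of $(\g_1 S_1,\ldots,\g_{n-1} S_{n-1})$.

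The real work is $(iii) \Rightarrow (ii)$, following the Wold-decomposition strategy used by Agler and Young. Since $S_n^* S_n = I$, $S_n$ is an isometry, and the classical Wold decomposition writes $\m H = \m H_1 \oplus \m H_2$, where $\m H_1 = \bigcap_{k \geq 0} S_n^k \m H$ is the maximal subspace on which $S_n$ restricts to a unitary and $\m H_2 = \m H_1^\perp$ is the pure-isometric part. The crux is showing $\m H_1$ (and hence $\m H_2$) reduces each $S_i$. Invariance of $\m H_1$ under $S_i$ is immediate from commutativity with $S_n$: any $x \in \m H_1$ equals $S_n^k y_k$ for every $k \geq 0$, so $S_i x = S_n^k (S_i y_k) \in S_n^k \m H$ for every $k$. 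Invariance under $S_i^*$ is more delicate and uses the adjoint identity $S_i^* S_n = S_{n-i}$ obtained from (iii): writing $x = S_n y$ with $y \in \m H_1$ (possible since $S_n|_{\m H_1}$ is unitary, hence onto), we get $S_i^* x = S_{n-i} y$, which lies in $\m H_1$ by the previous step.

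It remains to identify each summand. For $\m H_1$, condition (ii) of Theorem \ref{u} holds: $S_n|_{\m H_1}$ is unitary by construction, the identity $S_n^* S_i = S_{n-i}^*$ restricts to $\m H_1$ since $\m H_1$ is reducing, and the $\Gamma_{n-1}$-contraction property passes to the common (reducing, hence invariant) subspace $\m H_1$ by part~1 of Remark \ref{ex}; hence $(S_1|_{\m H_1},\ldots,S_n|_{\m H_1})$ is a $\Gamma_n$-unitary. For $\m H_2$, the hypotheses of Theorem \ref{p.i} hold: $S_n|_{\m H_2}$ is a pure isometry, so there is a unitary $U : \m H_2 \to H^2(\m E)$ identifying $S_n|_{\m H_2}$ with $M_z$ and each $S_i|_{\m H_2}$ with $M_{\Phi_i}$ for some $\Phi_i \in H^\infty \m L(\m E)$; the relation $S_n^* S_i = S_{n-i}^*$ forces $\Phi_i(z) = A_i + A_{n-i}^* z$ by the Fourier-coefficient argument used in the proof of Theorem \ref{p.i}; commutativity of the $S_i|_{\m H_2}$ yields the commutator identities in (iv); and the $\Gamma_{n-1}$-contraction hypothesis transfers via Lemma \ref{taut} to the statement that $(\g_1 \Phi_1(z),\ldots,\g_{n-1} \Phi_{n-1}(z))$ is a $\Gamma_{n-1}$-contraction for every $z \in \mb T$, which is condition (ii) of Theorem \ref{p.i}. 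The converse direction of that theorem then gives that $(S_1|_{\m H_2},\ldots,S_n|_{\m H_2})$ is a pure $\Gamma_n$-isometry. The main obstacle throughout is the reducing-subspace step in $(iii) \Rightarrow (ii)$: it is precisely the adjoint identity $S_n^* S_i = S_{n-i}^*$, not merely commutativity of the tuple, that is needed to pull $\m H_1$ through each $S_i^*$.
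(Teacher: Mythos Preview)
Your proof is correct and follows the same overall cycle $(ii)\Rightarrow(i)\Rightarrow(iii)\Rightarrow(ii)$ as the paper, with $(ii)\Rightarrow(i)$ and $(i)\Rightarrow(iii)$ handled identically. The difference lies in the heart of $(iii)\Rightarrow(ii)$, namely the proof that the Wold pieces $\m H_1,\m H_2$ reduce every $S_i$. The paper writes each $S_i$ as a $2\times 2$ block matrix with respect to $\m H=\m H_1\oplus\m H_2$, uses $S_iS_n=S_nS_i$ to obtain $S_{21}^{(i)}U=VS_{21}^{(i)}$ and then invokes Lemma~\ref{ay} (an intertwining lemma from \cite{AY}: if $TU=VT$ with $U$ unitary and $V$ a pure isometry, then $T=0$) to kill $S_{21}^{(i)}$; the relation $S_n^*S_i=S_{n-i}^*$ then kills $S_{12}^{(i)}$. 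Your argument bypasses Lemma~\ref{ay} entirely: you use only that $\m H_1=\bigcap_k S_n^k\m H$ is manifestly $S_i$-invariant by commutativity, and then the adjoint identity $S_i^*S_n=S_{n-i}$ together with surjectivity of $S_n|_{\m H_1}$ gives $S_i^*$-invariance directly. This is more elementary and self-contained; the paper's block-matrix route, on the other hand, makes the role of the pure-isometry part more explicit and is the standard Agler--Young template. The identification of the two summands via Theorem~\ref{u} and Theorem~\ref{p.i} is then carried out the same way in both proofs.
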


\begin{proof} Suppose (i) holds. By definition, there exists $(\tilde S_1,\ldots,\tilde S_n)$ a $\Gamma_n$-unitary on $\mathcal K$ containing $\mathcal H$ such that $\mathcal H$ is a invariant subspace of $\tilde S_i$'s and $S_i$'s are restrictions of $\tilde S_i$'s to $\mathcal H$. From Theorem \ref{u}, it follows that $\tilde S_i$'s are satisfying the relations:
$
{\tilde S}_n^*{\tilde S}_n=I,\, {\tilde S}_n^*{\tilde S}_i={\tilde S}_{n-i}^*
$
 and $(\g_1{\tilde S}_1,\ldots,\g_{n-1}{\tilde S}_{n-1})$ is a $\Gamma_{n-1}$-contraction,
   where $\g_i=\frac{n-i}{n}$ for $i=1,\ldots,n-1$. Compressing to the common invariant subspace $\mathcal H$ and by part (1) of remark \ref{ex}, we obtain
 $$
 S_n^*S_n=I,\, S_n^*S_i=S_{n-i}^*
 $$
 and $(\g_1S_1,\ldots,\g_{n-1}S_{n-1})$ is a $\Gamma_{n-1}$-contraction, where $\g_i=\frac{n-i}{n}$ for $i=1,\ldots,n-1$. Thus (i) implies (iii).

 Suppose (iii) holds. By Wold decomposition, we may write $S_n = U\oplus V$ on $\mathcal H = \mathcal H_1\oplus\mathcal H_2$ where $\mathcal H_1,\mathcal H_2$ are reducing subspaces for $S_n$, $U$ is unitary and $V$ is pure isometry. Let us write
 $$
 S_i = \begin{bmatrix} S_{11}^{(i)} & S_{12}^{(i)}\\ S_{21}^{(i)} & S_{22}^{(i)} \end{bmatrix}
 $$
 with respect to this decomposition, where $S_{jk}^{(i)}$ is a bounded operator from $\mathcal H_k$ to $\mathcal H_j$. Since $S_nS_i = S_iS_n$, we have
 $$
 \begin{bmatrix} US_{11}^{(i)} & US_{12}^{(i)}\\ VS_{21}^{(i)} & VS_{22}^{(i)} \end{bmatrix} = \begin{bmatrix} S_{11}^{(i)}U & S_{12}^{(i)}V\\ S_{21}^{(i)}U & S_{22}^{(i)}V \end{bmatrix},\, i = 1,\ldots,n-1.
 $$
 Thus, $S_{21}^{(i)}U = VS_{21}^{(i)}$ and hence by Lemma \ref{ay}, $S_{21}^{(i)} = 0,\, i = 1,\ldots,n-1$. Now $S_n^*S_i=S_{n-i}^*$ gives
 $$
  \begin{bmatrix} U^*S_{11}^{(i)} & U^*S_{12}^{(i)}\\ 0 & V^*S_{22}^{(i)} \end{bmatrix} = \begin{bmatrix} S_{11}^{(n-i)*} & 0\\ S_{12}^{(n-i)*} & S_{22}^{(n-i)*} \end{bmatrix},\, i = 1,\ldots,n-1.
 $$
 It follows that $S_{12}^{(i)} = 0,\, i = 1,\ldots,n-1$. So $\mathcal H_1, \mathcal H_2$ are common reducing subspace for $S_1,\ldots,S_n$. From the matrix equation above, we have $U^*S_{11}^{(i)} = S_{11}^{(n-i)*},\, i = 1,\dots,n-1$. Thus by part (i) of the remark \ref{ex} and part (ii) of Theorem \ref{u}, it follows that $(S_{11}^{(1)},\ldots,S_{11}^{(n-1)},U)$ is a $\Gamma_n$-unitary.

 We now require to show that $(S_{22}^{(1)},\ldots,S_{22}^{(n-1)},V)$ is a pure $\Gamma_n$-isometry. Since V is a pure isometry $\mathcal H$ is separable, we can identify it with the shift operator $M_z$ on the space of vector valued functions $H^2(\mathcal E)$, for some separable Hilbert space $\mathcal E$. Since $S_{22}^{(i)}$'s commute with $V$, there exists $\Phi_i\in H^\infty \mathcal L(\mathcal E)$ such that $S_{22}^{(i)}$ can be identified with $M_{\Phi_i}$ for $1\leq i\leq n-1$. As $(S_1,\ldots, S_n)$ is a $\Gamma_n$-contraction, from Lemma \ref{projection} and Lemma \ref{taut}, it follows that $(\g_1S_{22}^{(1)},\ldots,\g_{n-1}S_{22}^{(n-1)})$ is a $\Gamma_{n-1}$-contraction,
   where $\g_i=\frac{n-i}{n}$ for $i=1,\ldots,n-1$. The relations $V^*S_{22}^{(i)} = S_{22}^{(n-i)*}$ yield
 $$
 M_{\bar z} M_{\Phi_i} = M_{\Phi_{n-i}}^*,\, 1\leq i\leq n-1.
 $$
 Calculations similar to the first part of the proof of Theorem \ref{p.i}, we get each $\Phi_i(z)$ is of the form $A_i + A_{n-i}^*z$ for some $A_i$'s in $\mathcal L(\mathcal E),\, i=1,\ldots,n-1$. Now since $S_{22}^{(i)}$'s commutes, we have
$$
[A_i, A_j] = 0 \mbox{~and~} [A_i, A_{n-j}^*] = [A_j, A_{n-i}^*] \mbox{~for~} 1\leq i,j \leq n-1.
$$
Hence, by Theorem \ref{p.i}, $(S_1|_{{\mathcal H}_2},\ldots, S_n|_{{\mathcal H}_2})$ is a pure $\Gamma_n$-isometry. Thus (iii) implies (ii).

It is easy to see that (ii) implies (i).
\end{proof}

  \begin{cor}
   Let $S_i, \, i=1,\ldots,n,$ be commuting operators on a Hilbert space $\mathcal H$. $(S_1,\ldots, S_n)$ is a $\Gamma_n$-co-isometry if and only if
   $$
   S_nS_n^*=I,\, S_nS_i^*=S_{n-i}
   $$ and $(\g_1S_1^*,\ldots,\g_{n-1}S_{n-1}^*)$ is a $\Gamma_{n-1}$-contraction,
   where $\g_i=\frac{n-i}{n}$ for $i=1,\ldots,n-1$.
  \end{cor}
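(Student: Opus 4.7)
The plan is to reduce the corollary directly to Theorem \ref{wold} applied to the adjoint tuple. By definition, $(S_1,\ldots,S_n)$ is a $\Gamma_n$-co-isometry if and only if $(S_1^*,\ldots,S_n^*)$ is a $\Gamma_n$-isometry. Since the $S_i$ commute, so do the $S_i^*$, so this tuple is legitimately a candidate for Theorem \ref{wold}.

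Now I would simply substitute $T_i := S_i^*$ into the equivalence $(\mathrm{i}) \Leftrightarrow (\mathrm{iii})$ of Theorem \ref{wold}. The condition $T_n^* T_n = I$ translates to $S_n S_n^* = I$, and the relation $T_n^* T_i = T_{n-i}^*$ translates to $S_n S_i^* = (S_{n-i}^*)^* = S_{n-i}$ for $i=1,\ldots,n-1$. Finally, the condition that $(\g_1 T_1,\ldots,\g_{n-1} T_{n-1})$ is a $\Gamma_{n-1}$-contraction becomes exactly the stated condition that $(\g_1 S_1^*,\ldots,\g_{n-1} S_{n-1}^*)$ is a $\Gamma_{n-1}$-contraction.

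There is essentially no obstacle: the corollary is a formal adjoint of Theorem \ref{wold}, and the only tiny thing to check is the bookkeeping of stars when converting $T_n^* T_i = T_{n-i}^*$ back to a statement about $S_i$ (which yields $S_n S_i^* = S_{n-i}$, matching the claim). Thus the proof is a one-line application of Theorem \ref{wold} to $(S_1^*,\ldots,S_n^*)$, together with the definition of a $\Gamma_n$-co-isometry.
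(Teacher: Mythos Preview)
Your proposal is correct and is exactly the intended argument: the paper states this corollary without proof, treating it as the immediate adjoint reformulation of the equivalence (i)$\Leftrightarrow$(iii) in Theorem~\ref{wold}, which is precisely what you carry out.
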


\section{Characterization of invariants subspaces for $\Gamma_n$-isometries}
This section is devoted to characterize the joint invariant subspaces of pure $\Gamma_n$-isometries. Similar characterizations
for pure $\Gamma_2$-isometries appears in \cite{S}.

In the light of Theorem \ref{p.i}, we start with a characterization of pure $\Gamma_n$-isometries in terms of the parameters
associated with them. For simplicity of notation, let $(M_\mathbf \Phi,M_z)$ denote the $n$-tuple of multiplication operators
$(M_{\Phi_1},\ldots,M_{\Phi_{n-1}},M_z)$ on $H^2(\m E),$ where $\Phi_i\in H^\infty\m L(\m E),i=1,\ldots,n-1.$
Throughout this section, we will be using the canonical identification of $H^2(\mathcal E)$ with $H^2\otimes\mathcal E$ by the map $z^n\xi \mapsto z^n\otimes\xi$, where $n\in\mathbb N\cup\{0\}$ and $\xi\in\mathcal E$, whenever necessary.

\begin{thm}\label{equi}
Let $\Phi_i(z) = A_i + A_{n-i}^*z$ and $\tilde\Phi_i(z) = \tilde A_i +\tilde A_{n-i}^*z$ be in  $H^\infty \mathcal L(\mathcal E)$ for some  $A_i\in\mathcal L(\mathcal E)$ and $\tilde A_i\in\mathcal L(\mathcal F)$ respectively, $i=1,\ldots,n-1$. Then the $n$-tuple $(M_\mathbf \Phi,M_z)$ on $H^2(\mathcal E)$ is unitarily equivalent to the $n$-tuple $(M_{\tilde{\mathbf \Phi}},M_z)$ on $H^2(\mathcal F)$ if and only if the $(n-1)$-tuples $(A_1,\ldots,A_{n-1})$ and $(\tilde A_1,\ldots,\tilde A_{n-1})$ are unitarily equivalent.
\end{thm}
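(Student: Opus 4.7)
The proof splits into an easy reverse direction and a more substantive forward direction. For the easy direction, suppose $U:\mathcal E\to\mathcal F$ is a unitary with $UA_i=\tilde A_iU$ for $i=1,\ldots,n-1$. Define $W:=I_{H^2}\otimes U:H^2(\mathcal E)\to H^2(\mathcal F)$; this is unitary and satisfies $WM_z=M_zW$ trivially. Taking adjoints of the intertwining relations yields $UA_{n-i}^*=\tilde A_{n-i}^*U$, hence $U\Phi_i(z)=\tilde\Phi_i(z)U$ for every $z\in\mathbb T$, which rephrases as $WM_{\Phi_i}=M_{\tilde\Phi_i}W$. So the $n$-tuples are unitarily equivalent.

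For the forward direction, let $W:H^2(\mathcal E)\to H^2(\mathcal F)$ be a unitary satisfying $WM_z=M_zW$ and $WM_{\Phi_i}=M_{\tilde\Phi_i}W$ for $i=1,\ldots,n-1$. My first step is to show that $W$ has the form $I_{H^2}\otimes U$ for a unitary $U:\mathcal E\to\mathcal F$. The Wold decomposition of the shift $M_z$ on $H^2(\mathcal E)$ identifies the wandering subspace as $\mathcal E=H^2(\mathcal E)\ominus M_zH^2(\mathcal E)$, and similarly for $\mathcal F$. Because $W$ is unitary and intertwines the two shifts, $W(M_zH^2(\mathcal E))=M_zW(H^2(\mathcal E))=M_zH^2(\mathcal F)$, so $W(\mathcal E)\perp M_zH^2(\mathcal F)$, forcing $W(\mathcal E)\subseteq\mathcal F$. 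By the same argument applied to $W^{-1}$, we obtain $W^{-1}(\mathcal F)\subseteq\mathcal E$, so $U:=W|_{\mathcal E}:\mathcal E\to\mathcal F$ is a unitary. Since $W$ commutes with $M_z$, it is determined on the total set $\{z^n\xi:n\ge0,\,\xi\in\mathcal E\}$ by $W(z^n\xi)=z^nU\xi$, giving $W=I_{H^2}\otimes U$.

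Now I use the remaining intertwining relations. For $\xi\in\mathcal E$, compute
\begin{eqnarray*}
WM_{\Phi_i}\xi &=& W\bigl(A_i\xi+(A_{n-i}^*\xi)\,z\bigr)=UA_i\xi+(UA_{n-i}^*\xi)\,z,\\
M_{\tilde\Phi_i}W\xi &=& M_{\tilde\Phi_i}(U\xi)=\tilde A_iU\xi+(\tilde A_{n-i}^*U\xi)\,z.
\end{eqnarray*}
Comparing the constant term and the coefficient of $z$ gives $UA_i=\tilde A_iU$ for every $i=1,\ldots,n-1$ (the coefficient-of-$z$ equation $UA_{n-i}^*=\tilde A_{n-i}^*U$ is the adjoint of the constant-term equation with $i$ replaced by $n-i$, so it is redundant). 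This is precisely the unitary equivalence of the $(n-1)$-tuples $(A_1,\ldots,A_{n-1})$ and $(\tilde A_1,\ldots,\tilde A_{n-1})$.

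The only nontrivial ingredient is the wandering-subspace argument in Step~1; the rest is bookkeeping on constant and linear coefficients using the explicit form $\Phi_i(z)=A_i+A_{n-i}^*z$ from Theorem \ref{p.i}(iii). I do not expect any serious obstacle since the structural rigidity of unitaries commuting with a shift of given multiplicity is classical.
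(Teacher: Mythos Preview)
Your proof is correct and follows essentially the same route as the paper: both directions hinge on the fact that a unitary intertwining the two shifts must be of the form $I_{H^2}\otimes U$, after which one simply compares constant and linear coefficients of the $\Phi_i$. The only difference is that you supply a short wandering-subspace argument for this structural fact, whereas the paper simply asserts it.
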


\begin{proof}
Suppose the $n$-tuple $(M_{{\mathbf \Phi}},M_z)$ on $H^2(\mathcal E)$ is unitarily equivalent to the $n$-tuple $(M_{\tilde{\mathbf \Phi}},M_z)$ on $H^2(\mathcal F)$. We can identify the map $\Phi_i$ (similarly $\tilde\Phi_i$) by $I_{H^2}\otimes A_i + M_z\otimes A_{n-i}^*$. So, there exist a unitary $U : H^2\otimes\mathcal E\ra H^2\otimes\mathcal F$ such that
\beq\label{ue}
U(I_{H^2}\otimes A_i + M_z\otimes A_{n-i}^*)U^* = I_{H^2}\otimes \tilde A_i + M_z\otimes \tilde A_{n-i}^*,\, i=1,\ldots,n-1,
\eeq and
\beq\label{ue1}
U(M_z\otimes I_{\mathcal E})U^* = M_z\otimes I_{\mathcal F}.
\eeq
 From equation \eqref{ue1}, it follows that there exists a unitary $\tilde U :\mathcal E\ra\mathcal F$ such that $U = I_{H^2}\otimes \tilde U$. Consequently, the equation \eqref{ue} can be written as
 $$
 \tilde U A_i \tilde U^* + \tilde U A_{n-i}^*z\tilde U^* z = \tilde A_i +\tilde A_{n-i}^*z,\,i=1,\ldots,n-1,
 $$ for all $z\in\mathbb T$. Hence comparing the coefficients, we obtain $\tilde U A_i \tilde U^* = \tilde A_i,\,i=1,\ldots,n-1$ which completes the proof in forward direction.

Conversely, suppose there exist a unitary $\tilde U :\mathcal E\ra\mathcal F$ that intertwines $A_i$ and $\tilde A_i$, that is, $\tilde U A_i \tilde U^* = \tilde A_i$ for each $i = 1,\ldots,n-1$. Let $U: H^2\otimes\mathcal E\ra H^2\otimes\mathcal F$ be the map defined by $U = I_{H^2}\otimes \tilde U$. Clearly, $U$ is a unitary and from the computations similar to above, it is easy to see that $U$ intertwines $M_{\Phi_i}$ with $M_{\tilde \Phi_i}$ for each $i=,1\ldots,n-1$, and $M_z$. This completes the proof.
\end{proof}

In the following corollary we express the above theorem in terms of pure $\Gamma_n$-isometries.

\begin{cor}
 Let $(S_1,\ldots,S_n)$ and $(\tilde S_1,\ldots, \tilde S_n)$ be a pair of pure $\Gamma_n$-isometries. Then $(S_1,\ldots,S_n)$ and $(\tilde S_1,\ldots, \tilde S_n)$ are unitarily equivalent if and only if the $(n-1)$-tuples $(S_{n-i}^* - S_iS_n^*)_{i=1}^{n-1}$ and $(\tilde S_{n-i}^* - \tilde S_i\tilde S_n^*)_{i=1}^{n-1}$  are unitary equivalent.
 \end{cor}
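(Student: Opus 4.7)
By Theorem \ref{p.i}, we may identify $(S_1,\ldots,S_n)$ with $(M_{\Phi_1},\ldots,M_{\Phi_{n-1}},M_z)$ on $H^2(\mathcal E)$ for some separable Hilbert space $\mathcal E$ and operators $A_i\in\mathcal L(\mathcal E)$ with $\Phi_i(z)=A_i+A_{n-i}^*z$, and similarly $(\tilde S_1,\ldots,\tilde S_n)$ with $(M_{\tilde\Phi_1},\ldots,M_{\tilde\Phi_{n-1}},M_z)$ on $H^2(\mathcal F)$ using $\tilde A_i\in\mathcal L(\mathcal F)$. By Theorem \ref{equi}, the unitary equivalence of the two pure $\Gamma_n$-isometries is equivalent to the unitary equivalence of the $(n-1)$-tuples $(A_1,\ldots,A_{n-1})$ and $(\tilde A_1,\ldots,\tilde A_{n-1})$. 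It therefore suffices to show that the latter is equivalent to the unitary equivalence of the defect tuples $(S_{n-i}^*-S_iS_n^*)_{i=1}^{n-1}$ and $(\tilde S_{n-i}^*-\tilde S_i\tilde S_n^*)_{i=1}^{n-1}$.

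\textbf{Key computation.} Using the canonical identification $H^2(\mathcal E)=H^2\otimes\mathcal E$, one has $M_{\Phi_i}=I\otimes A_i+M_z\otimes A_{n-i}^*$, and applying the defining formula at the index $n-i$ gives $M_{\Phi_{n-i}}^*=I\otimes A_{n-i}^*+M_z^*\otimes A_i$. Since $M_zM_z^*=I-P_0$, where $P_0$ is the projection onto the constants in $H^2$, a direct calculation yields
\[
M_{\Phi_{n-i}}^*-M_{\Phi_i}M_z^*\;=\;P_0\otimes A_{n-i}^*,\qquad i=1,\ldots,n-1.
\]
Thus $S_{n-i}^*-S_iS_n^*$ is, modulo the model, the operator $P_0\otimes A_{n-i}^*$, which in block form relative to $H^2(\mathcal E)=(1\otimes\mathcal E)\oplus(zH^2\otimes\mathcal E)$ reads $A_{n-i}^*\oplus 0$. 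This already settles the forward direction of the corollary: a unitary $V:\mathcal E\to\mathcal F$ intertwining $(A_i)_i$ with $(\tilde A_i)_i$ also intertwines $(A_i^*)_i$ with $(\tilde A_i^*)_i$, so $I_{H^2}\otimes V$ intertwines the corresponding defect tuples.

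\textbf{Converse and main obstacle.} Suppose $W:H^2(\mathcal E)\to H^2(\mathcal F)$ is a unitary intertwining $(P_0\otimes A_{n-i}^*)_i$ with $(P_0\otimes\tilde A_{n-i}^*)_i$. Writing $W$ as a $2\times 2$ block matrix $(W_{jk})$ with respect to the above decomposition and expanding the intertwining relations for each $i=1,\ldots,n-1$ yields $W_{11}A_{n-i}^*=\tilde A_{n-i}^*W_{11}$, $W_{21}A_{n-i}^*=0$ and $\tilde A_{n-i}^*W_{12}=0$. Using the unitarity $W^*W=I=WW^*$ together with these relations, $W_{11}:\mathcal E\to\mathcal F$ is isometric on the closed span of $\bigcup_i\mathrm{Ran}(A_{n-i}^*)$, mapping it isometrically onto the analogous subspace of $\mathcal F$, while the residual directions $\bigcap_i\ker A_{n-i}\subseteq\mathcal E$ and $\bigcap_i\ker\tilde A_{n-i}^*\subseteq\mathcal F$ are forced to have matching dimensions by the global unitarity of $W$. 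Patching $W_{11}$ on the active part with any unitary between these residual subspaces produces a unitary $\mathcal E\to\mathcal F$ intertwining $(A_i^*)_i$ with $(\tilde A_i^*)_i$, equivalently $(A_i)_i$ with $(\tilde A_i)_i$; Theorem \ref{equi} then closes the argument. The main obstacle is precisely this patching step: verifying that the contraction $W_{11}$ emerging from the blockwise analysis can be completed to a bona fide unitary intertwiner between $\mathcal E$ and $\mathcal F$, which reduces to a careful dimension count on the joint kernels of the $A_i^*$'s.
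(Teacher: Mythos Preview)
Your overall approach coincides with the paper's: pass to the model of Theorem~\ref{p.i}, compute that $S_{n-i}^*-S_iS_n^*$ is carried to $P_{\mathbb C}\otimes A_{n-i}^*$ (your $P_0$), and then invoke Theorem~\ref{equi}. The paper's own proof is in fact terser than yours on the converse: having obtained $M_{\Phi_{n-i}}^*-M_{\Phi_i}M_{\bar z}=P_{\mathbb C}\otimes A_{n-i}^*$, it simply asserts that, by Theorem~\ref{equi}, unitary equivalence of $(M_{\mathbf\Phi},M_z)$ is ``determined by'' unitary equivalence of the defect tuple, and does not carry out the block-matrix analysis you attempt.

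The step you flag as the ``main obstacle'' is a genuine gap, and your patching argument does not close it. Your claim that the residual subspaces $\bigcap_i\ker A_{n-i}\subseteq\mathcal E$ and $\bigcap_i\ker\tilde A_{n-i}^*\subseteq\mathcal F$ are forced to have equal dimension by the unitarity of $W:H^2(\mathcal E)\to H^2(\mathcal F)$ is false in general. Take $n=2$, $\mathcal E=\mathbb C$, $\mathcal F=\mathbb C^2$, and $A_1=0$, $\tilde A_1=0$. Both choices satisfy the hypotheses of Theorem~\ref{p.i} and yield the pure $\Gamma_2$-isometries $(0,M_z)$ on $H^2(\mathbb C)$ and on $H^2(\mathbb C^2)$. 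The defect operators $P_{\mathbb C}\otimes A_1^*$ and $P_{\mathbb C}\otimes\tilde A_1^*$ are both zero, hence unitarily equivalent via \emph{any} unitary between the separable infinite-dimensional spaces $H^2(\mathbb C)$ and $H^2(\mathbb C^2)$; yet the two $\Gamma_2$-isometries are not unitarily equivalent, since their last coordinates are shifts of multiplicities $1$ and $2$. Here the residual kernels have dimensions $1$ and $2$, so no dimension count on the blocks of $W$ can produce a unitary $\mathcal E\to\mathcal F$. The reading that makes both the paper's assertion and your argument go through is to regard the defect operators on the wandering subspace $\ker S_n^*$ (identified with $\mathcal E$), where $P_{\mathbb C}\otimes A_{n-i}^*$ restricts to $A_{n-i}^*$ itself and Theorem~\ref{equi} applies directly; as operators on the full space $\mathcal H$, unitary equivalence of the defect tuples is strictly weaker.
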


 \begin{proof}
  Let the pure $\Gamma_n$-isometry $(S_1,\ldots,S_n)$ is equivalent to $(M_{{\mathbf \Phi}},M_z)$ where $\Phi_i(z) = A_i + A_{n-i}^*z$ be in  $H^\infty \mathcal L(\mathcal E)$ for some  $A_i\in\mathcal L(\mathcal E),\,i=1,\ldots,n-1$ satisfying conditions (ii) to (iv) in Theorem \ref{u}. Clearly the $(n-1)$-tuple $(S_{n-i}^* - S_iS_n^*)_{i=1}^{n-1}$ is equivalent to $(M_{\Phi_{n-i}}^* - M_{\Phi_i}M_{\bar z})_{i=1}^{n-1}$. Note that with the canonical identification we have
  \beqa
  M_{\Phi_{n-i}}^* - M_{\Phi_i}M_{\bar z} &=& (I_{H^2}\otimes A_{n-i} + M_z\otimes A_i^*)^* - 
(I_{H^2}\otimes A_i + M_z\otimes A_{n-i}^*)(M_{\bar z}\otimes I_{\mathcal E})\\
  &=& (I_{H^2} - M_zM_{\bar z})\otimes A_{n-i}^*\\
  &=& P_{\C}\otimes A_{n-i}^*
  \eeqa
  where $P_{\C}$ is the orthogonal projection from $H^2$ to the scalars in $H^2$. Thus it follows for Theorem \ref{equi} that unitary equivalence of the $n$-tuple $(M_{{\mathbf \Phi}},M_z)$ is determined by unitary equivalence of the $(n-1)$-tuple $(M_{\Phi_{n-i}}^* - M_{\Phi_i}M_{\bar z})_{i=1}^{n-1}$,  the unitary equivalence of $(S_1,\ldots,S_n)$ is determined by $(S_{n-i}^* - S_iS_n^*)_{i=1}^{n-1}$. This completes the proof.
 \end{proof}

Characterization of invariant subspaces of $\Gamma_n$-isometries essentially boils down to that of pure $\Gamma_n$-isometries due to Wold type decomposition in Theorem \ref{wold} which is again same as characterizing invariant subspace for the associated model space obtained in Theorem \ref{p.i}. The following theorem discusses this issue.

A closed subspace $\mathcal M\neq \{0\}$ of $H^2(\m E)$ is said to be $(M_\mathbf \Phi,M_z)$-\emph{invariant} if $\m M$ is invariant
under $M_{\Phi_i}$ and $M_z$ for all $i=1,\ldots,n-1.$

Let $\m M\neq \{0\}$ be a closed subspace of $H^2(\m E_*).$ It follows from Beurling-Lax-Halmos theorem that $\m M$ 
is invariant under $M_z$ if and only 
if there exists a Hilbert space  $\m E$ and an inner function $\Theta\in H^\infty\m L(\m E,\m E_*)$ ($\Theta $ is an isometry
almost everywhere on $\mb T$) such that 
$$\m M=M_\Theta H^2(\m E).$$

 \begin{thm}
  Let $\mathcal M\neq 0$ be a closed subspace of $H^2({\mathcal E_*})$ and $\Phi_i, i=1,\ldots,n-1$ be in
$H^\infty{\mathcal L(\mathcal E_*)}$ such that $(M_{{\mathbf \Phi}},M_z)$ is a pure $\Gamma_n$-isometry
on $H^2(\mathcal E_*)$. Then $\mathcal M$ is a $(M_{{\mathbf \Phi}},M_z)$-invariant subspace if and only
if there exist $\Psi_i,\, i=1,\ldots,n-1$ in $H^\infty{\mathcal L(\mathcal E)}$ such that
$(M_{{\mathbf \Psi}},M_z)$ is a pure $\Gamma_n$-isometry on $H^2(\mathcal E)$ and
  $$
  {\Phi_i}{\Theta} = {\Theta}{\Psi_i}, \, i=1,\ldots,n-1,
  $$ where $\Theta \in H^\infty{\mathcal L(\mathcal E, \mathcal E_*)}$ is the Beurling-Lax-Halmos representation of $\mathcal M$.
 \end{thm}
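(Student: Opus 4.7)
The converse direction is immediate: if $\Phi_i\Theta=\Theta\Psi_i$ in $H^\infty\mathcal L(\mathcal E,\mathcal E_*)$, then $M_{\Phi_i}M_\Theta H^2(\mathcal E)=M_\Theta M_{\Psi_i}H^2(\mathcal E)\subseteq \mathcal M$, and $M_z$-invariance of $\mathcal M$ is built into the Beurling--Lax--Halmos representation. The substantive content is therefore the forward direction, and the plan is to pull each $M_{\Phi_i}|_\mathcal M$ back to $H^2(\mathcal E)$ via the isometry $M_\Theta$ and recognize the result as an analytic multiplication operator.

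Since $\mathcal M$ is $M_z$-invariant, Beurling--Lax--Halmos supplies a separable Hilbert space $\mathcal E$ and an inner $\Theta\in H^\infty\mathcal L(\mathcal E,\mathcal E_*)$ with $\mathcal M=M_\Theta H^2(\mathcal E)$, where $M_\Theta:H^2(\mathcal E)\to\mathcal M$ is unitary. Set $N_i:=M_\Theta^*M_{\Phi_i}M_\Theta$ on $H^2(\mathcal E)$; since $\mathcal M$ is $M_{\Phi_i}$-invariant, $M_\Theta N_i=M_{\Phi_i}M_\Theta$. Because $\Theta\in H^\infty$, $M_\Theta$ commutes with $M_z$, and together with $[M_z,M_{\Phi_i}]=0$ on $H^2(\mathcal E_*)$ this yields $[N_i,M_z]=0$ on $H^2(\mathcal E)$. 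The commutant of $M_z$ on $H^2(\mathcal E)$ is precisely the algebra of analytic multiplication operators, so there is $\Psi_i\in H^\infty\mathcal L(\mathcal E)$ with $N_i=M_{\Psi_i}$; equating $M_\Theta M_{\Psi_i}=M_{\Phi_i}M_\Theta$ translates to $\Theta\Psi_i=\Phi_i\Theta$ pointwise a.e.\ on $\mathbb T$.

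Finally, I verify that $(M_{\Psi_1},\ldots,M_{\Psi_{n-1}},M_z)$ is a pure $\Gamma_n$-isometry on $H^2(\mathcal E)$. By part (1) of Remark \ref{ex}, the restrictions $(M_{\Phi_1}|_\mathcal M,\ldots,M_{\Phi_{n-1}}|_\mathcal M,M_z|_\mathcal M)$ form a $\Gamma_n$-contraction; and since a $\Gamma_n$-isometry is by definition the restriction of a $\Gamma_n$-unitary, restricting further to the common invariant subspace $\mathcal M$ keeps it a $\Gamma_n$-isometry. The unitary $M_\Theta$ carries $M_{\Psi_i}$ to $M_{\Phi_i}|_\mathcal M$ and $M_z$ to $M_z|_\mathcal M$, so $(M_\Psi,M_z)$ is a $\Gamma_n$-isometry; purity is automatic since $M_z$ on $H^2(\mathcal E)$ is a pure isometry. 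The one nontrivial step is the commutation $[N_i,M_z]=0$, which rests on $\Theta$ being analytic so that $M_\Theta$ commutes with $M_z$; once that is in hand, extracting $\Psi_i$ and verifying the pure $\Gamma_n$-isometry property are structural.
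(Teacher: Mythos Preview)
Your argument is correct and follows the same overall strategy as the paper: use Beurling--Lax--Halmos to obtain $\Theta$, pull $M_{\Phi_i}|_{\mathcal M}$ back through the isometry $M_\Theta$, and identify the resulting operators as analytic multipliers $M_{\Psi_i}$ via the commutant of $M_z$.

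The one place where your route differs from the paper's is in the final verification that $(M_{\Psi_1},\ldots,M_{\Psi_{n-1}},M_z)$ is a pure $\Gamma_n$-isometry. The paper verifies the structural conditions of Theorem~\ref{p.i} explicitly: it checks that the $\Psi_i$'s commute, that $(\gamma_1 M_{\Psi_1},\ldots,\gamma_{n-1}M_{\Psi_{n-1}})$ is a $\Gamma_{n-1}$-contraction via the intertwining, and that $M_{\bar z}M_{\Psi_i}=M_{\Psi_{n-i}}^*$ by a direct computation using $M_{\bar z}M_\Theta^*=M_\Theta^*M_{\bar z}$. You instead observe that the restriction of a $\Gamma_n$-isometry to a common invariant subspace is again a $\Gamma_n$-isometry (immediate from the definition, since you are restricting the ambient $\Gamma_n$-unitary further), and then transport by the unitary $M_\Theta$. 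Your route is shorter and more conceptual, avoiding any computation with symbols; the paper's route has the slight advantage of making the structure of the $\Psi_i$'s (namely $\Psi_i(z)=B_i+B_{n-i}^*z$) visible, which is relevant if one wants to relate the parameters $B_i$ to the $A_i$ through $\Theta$.
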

\begin{proof}
 We will prove only the forward direction as the other part is easy to see. Let $\mathcal M$ is invariant under $(M_{{\mathbf \Phi}},M_z)$. Thus, in particular, $\mathcal M$ is invariant under $M_z$ and hence the Beurling -Lax-Halmos represntation of $\mathcal M$ is $\Theta H^2(\mathcal E)$ where $\Theta H^\infty{\mathcal L(\mathcal E, \mathcal E_*)}$ is an inner multiplier. We also have $M_{\Phi_i}\Theta H^2(\mathcal E)\subseteq \Theta H^2(\mathcal E)$ for each $i=1,\ldots,n-1$. Thus, there exist $\Psi_i$'s, $i=1,\ldots,n-1$ in $H^\infty{\mathcal L(\mathcal E)}$ such that ${M_{\Phi_i}}{M_\Theta} = {M_\Theta}{M_{\Psi_i}},$ that is, ${\Phi_i}{\Theta} = {\Theta}{\Psi_i},\,i=1,\ldots,n-1$. Since $\Phi_i$'s commute, we have ${M_\Theta}{M_{\Psi_i}}{M_{\Psi_j}} = {M_{\Phi_i}}{M_{\Phi_j}}{M_\Theta} = {M_{\Phi_j}}{M_{\Phi_i}}{M_\Theta} = {M_\Theta}{M_{\Psi_j}}{M_{\Psi_i}}$ and hence $\Psi_i\Psi_j = \Psi_j\Psi_i,\,i,j=1,\ldots,n-1$. Furthermore, we have
$$
p(M_{\Phi_1},\ldots,M_{\Phi_{n-1}})M_{\Theta} =  M_{\Theta}p(M_{\Psi_1},\ldots,M_{\Psi_{n-1}})
$$ for all polynomials $p\in \mb C[z_1,\ldots,z_{n-1}]$. Therefore,
$$
\|p(\g_1M_{\Psi_1},\ldots,\g_{n-1}M_{\Psi_{n-1}})\|\leq \|M_\Theta^*p(\g_1M_{\Phi_1},\ldots,\g_{n-1}M_{\Phi_{n-1}})M_{\Theta}\| \leq \Vert p\Vert_{\infty,\Gamma_{n-1},}
$$ for all polynomials $p\in \mb C[z_1,\ldots,z_{n-1}]$ and $\g_i=\frac{n-i}{n}$ for $i=1,\ldots,n-1$. Using Theorem \ref{p.i}, we also note that
$$
M_{\bar z} M_{\Psi_i} = M_{\bar z}M_{\Theta}^*M_{\Phi_i}M_{\Theta} = M_{\Theta}^*M_{\bar z}M_{\Phi_i}M_{\Theta} = M_{\Theta}^*M_{\Phi_{n-i}}^*M_{\Theta} = M_{\Psi_{n-i}}^*,\, i=1,\ldots,n-1.
$$ From the observations made above and by Theorem \ref{p.i}, it follows that $(M_{{\mathbf \Psi}},M_z)$ is a pure $\Gamma_n$-isometry on $H^2(\mathcal E)$ and this completes the proof.
\end{proof}

\end{document}